\documentclass[11pt]{amsart}

\setlength{\topmargin}{1.5cm} \setlength{\textwidth}{14cm} \setlength{\textheight}{19.5cm}
\setlength{\footskip}{.7cm} \addtolength{\evensidemargin}{-1cm} \addtolength{\oddsidemargin}{-1cm}
\setlength{\headsep}{.6cm} \setlength{\headheight}{4mm}

\usepackage{amsmath,amsfonts,amsthm, amssymb,color}

\newtheorem{theorem}{Theorem} [section]

\newtheorem{proposition}[theorem]{Proposition}
\newtheorem{corollary}[theorem]{Corollary}

\newtheorem{example}[theorem]{Example}
\newtheorem{question}[theorem]{Question}

\theoremstyle{definition}

\newcommand\N{{\mathbb N}}
\newcommand\Z{{\mathbb Z}}
\newcommand\C{{\mathbb C}}

\newcommand\slex{<_{sl}}
\newcommand\conj{\sim_c}

\newcommand\geol{{\Gamma}}   
\newcommand\wgeol{{geodesic language}}   
\newcommand\geos{{\gamma}}    
\newcommand\wgeos{{geodesic growth series}}    

\newcommand\sphl{{\Sigma}}    
\newcommand\wsphl{{spherical language}}    
\newcommand\sphs{{\sigma}}    
\newcommand\wsphs{{spherical growth series}}    

\newcommand\geocl{{\widetilde {\Gamma}}}   
\newcommand\wgeocl{{geodesic conjugacy language}}   
\newcommand\geocs{{\widetilde {\gamma}}}    
\newcommand\wgeocs{{geodesic conjugacy growth series}}  
  
\newcommand\sphcl{{\widetilde {\Sigma}}}  
\newcommand\wsphcl{{spherical conjugacy language}}    
\newcommand\sphcs{{\widetilde {\sigma}}}    
\newcommand\wsphcs{{spherical conjugacy growth series}}    

\newcommand\eqcl{{\widetilde {\mathcal E}}}    
\newcommand\weqcl{{equality conjugacy language}}    
\newcommand\eqcs{{\widetilde {\epsilon}}}    
\newcommand\weqcs{{equality conjugacy growth series}}    

\newcommand\conjl{{conjugacy language}}

\newcommand\sphr{{spherical}}

\newcommand\geocon{{conjugacy geodesic}}  

\newcommand\ra{\rightarrow}
\newcommand\sr{\stackrel}
\newcommand\lra{\longrightarrow}

\newcommand\ct{{conjugationally trimmed}}
\newcommand\tri{{trimmed}}

\newcommand\la{\lambda}
\newcommand\lm{\Lambda}
\newcommand\ei{=_{G_i}}
\newcommand\tht{\Theta}

\newcommand\aaa{\{a_1\}}
\newcommand\bb{\{a_2\}}
\newcommand\cc{\{a_3\}}
\newcommand\ab{\{a_1, a_2\}}
\newcommand\ac{\{a_1, a_3\}}

\newcommand{\uu}[1]{{\underline{\color{blue}#1}}}

\begin{document}

\author{L. Ciobanu and S. Hermiller}
\title{Conjugacy growth series and languages in groups}
\date{}
\begin{abstract}
In this paper we introduce the \wgeocl\ and \wgeocs\ 
for a finitely generated group.  We study the effects of
various group constructions on
rationality of
both the \wgeocs\ and \wsphcs, as well as on regularity of
the \wgeocl\ and \wsphcl.
In particular, we show that regularity of the \wgeocl\ is
preserved by the graph product construction, and rationality
of the \wgeocs\ is preserved by both direct and free products.

\bigskip

\noindent 2010 Mathematics Subject Classification: 20F65, 20E45.

\noindent Key words: Conjugacy growth, generating functions, graph products, 
regular languages.
\end{abstract}
\maketitle

\section{Introduction}\label{sec:intro}

For a finitely generated group, 
several growth functions and series associated with
elements or conjugacy classes of the group have
been studied.  In this paper, we study a
conjugacy growth series first examined by Rivin~\cite{rivin},
and introduce a new
growth series arising from the conjugacy classes, which
we show admits much stronger closure properties.  
We also study the regularity properties of languages
associated to the set of conjugacy classes.

Let $G=\langle X \rangle$ be a group generated by a finite inverse-closed
generating set $X$.
For each word $w \in X^*$, let $l(w)=l_X(w)$ denote the
length of this word over $X$.
Any language $L \subseteq X^*$ over the finite alphabet $X$ 
gives rise to two growth functions $:\N \cup \{0\} \ra \N \cup \{0\}$, 
namely the ``usual'' or {\em cumulative growth function} $\beta_L$ defined
by $\beta_L(n) := |\{w \in L \mid l(w) \le n\}|$ and the 
{\em strict growth function}  $\phi_L(n) := |\{w \in L \mid l(w) = n\}|$.
The generating series associated to these functions
are the ``usual'' or {\em cumulative growth series} 
$b_L(z) := \sum_{i=0}^\infty \beta_L(i)z^i$ and
the {\em strict growth series} $f_L(z) := \sum_{i=0}^\infty \phi_L(i)z^i$.
These growth functions and series are closely related, in that
$\phi_L(n)=\beta_L(n)-\beta_L(n-1)$ for all $n \ge 1$,
and $\phi_L(0)=\beta_L(0)$, and so these two series satisfy
the identity $f_L(z) = (1-z) b_L(z)$.
It is well known (see, for example,~\cite{brazil}) that
if the language $L$ is a regular language (i.e., the language
of a finite state automaton), then both of the series $b_L$ and $f_L$
are rational functions.
In this paper, we will focus on strict growth series associated
to two languages derived from the pair $(G,X)$.

Let $\conj$ denote the equivalence relation on $G$ given
by conjugacy, with set $G/\conj$ of equivalence classes,
and let $[g]_c$ denote the conjugacy class of $g \in G$.
Let $\pi:X^* \rightarrow G$ be the natural projection.
Fix a total ordering $<$ of the set $X$, and let $\slex$ be
the induced shortlex ordering of $X^*$.  
For each conjugacy class $c \in G/\conj$, there 
is a {\em shortlex conjugacy normal form} $z_c$ for $c$ 
that is the shortlex least word over $X$ representing an 
element of $G$ lying in $c$.  That is, 
$[\pi(z_c)]_c=c$, and for all 
$w \in X^*$ with $w \neq z_c$ and $[\pi(w)]_c=c$ 
we have $z_c \slex w$. 
We call the set 
\[\sphcl=\sphcl(G,X):=\{z_c \mid c \in G/\conj\}\]
the {\em \wsphcl} for $G$ over $X$.
Note that if $L$ is any other language of
length-minimal normal forms for the
conjugacy classes of $G$ over the generating set $X$,
then the growth functions (and hence also the
corresponding series) for the \wsphcl\ and $L$ must coincide;
that is, for all natural numbers $n$ we have
$\phi_\sphcl(n)=\phi_L(n)$.
The strict growth series 
\[\sphcs=\sphcs(G,X):=f_{\sphcl(G,X)}\]
is the {\em \wsphcs} for $G$ over $X$.
In Section~\ref{sec:spherical} we study this
language and series.
 
The corresponding cumulative growth function $\beta_{\sphcl(G,X)}$, 
known as the ``conjugacy growth function'', has been studied
by several authors; see the surveys
by Guba and Sapir~\cite{gubasapir} and
Breuillard and de Cornulier~\cite{bdc} for further
information on these functions.  For the class of
torsion-free word hyperbolic groups, 
Coornaert and Knieper~\cite{ck} have shown bounds on the
growth of the conjugacy growth function in terms of the exponential 
growth rate of the {\em \wsphl}
\[\sphl=\sphl(G,X):=\{y_g \mid g \in G\}\]
of shortlex normal forms for the elements of $G$;
here for each $g \in G$ the word $y_g \in X^*$ satisfies
$\pi(y_g)=g$ and whenever $w \in X^*$ with $w \neq y_g$ and $\pi(w)=g$ 
then $y_g \slex w$.
Analogous to the case of the conjugacy language above,
if $L$ is any other language of
geodesic normal forms for the
elements of $G$ over the generating set $X$,
then the growth functions for the \wsphl\ and $L$ must coincide;
that is, for all natural numbers $n$ we have
$\phi_\sphl(n)=\phi_L(n)$
 = the number
of elements of $G$ in the sphere of radius $n$
with respect to the word metric on $G$ induced by $X$.
The (usual) growth function of the group $G$ with respect to $X$, 
i.e.~the cumulative growth function $\beta_{\sphl(G,X)}$, 
is very well known and studied; see the texts by 
de la Harpe~\cite[Chapter~VI]{dlh} and Mann~\cite{mann} for 
surveys of results and open problems for the cumulative and strict
growth functions associated to this \wsphl.

Chiswell~\cite[Corollary 1]{chiswell} has shown that
rationality of the \wsphs\ (i.e.~the strict
growth series of the \wsphl\ $\sphl$) is preserved by the construction
of graph products of groups,
and in their proof of~\cite[Corollary~5.1]{hm}, Hermiller and
Meier show that regularity of the \wsphl\ is preserved by this
construction as well.  (The graph product construction includes
both direct and free products; see Section~\ref{sec:geodesic} for
the definition.)
In Proposition~\ref{prop:dirprodsph} we note that regularity
of the \wsphcl\ and rationality of the \wsphcs\ are 
preserved by direct products.
By contrast, in~\cite[Theorem~14.6]{rivin}, Rivin
has shown that although for the infinite cyclic group
$\Z=\langle a \rangle$ 
the \wsphcl\ is a regular language and
$\sphcs(\Z,\{a^{\pm 1}\})=\frac{1+z}{1-z}$ is rational,
the \wsphcs\ $\sphcs(F_2,\{a^{\pm 1},b^{\pm 1}\})$
for the free group on two generators $a,b$ is not a rational
function.  Combining this with the result above
on rationality of the growth of regular languages, 
this shows that the \wsphcl\ 
$\sphcl(F_2,\{a^{\pm 1},b^{\pm 1}\})$ is not regular.
In Section~\ref{sec:spherical}, we strengthen this
result to the broader class of context-free languages.

\medskip

\noindent{\bf Proposition~\ref{prop:freegpscl}.}
{\em Let $F=F(a,b)$ be a free group on generators $a, b$.
Then the \sphr\  conjugacy language $\sphcl(F, \{a^{\pm 1}, b^{\pm 1}\})$ 
is not context-free.
}

\medskip

Consequently neither regularity of the \wsphcl\ nor
rationality of the \wsphcs\ are preserved
by the free product construction.  
We also give further examples for which these properties
of {\wsphcl}s are not preserved by free products, in the following.

\medskip

\noindent{\bf Theorem~\ref{thm:freeprodfinite}.}
{\em For finite nontrivial groups $A$ and $B$ with generating sets $X_A=A\setminus 1_A$ 
and $X_B=B\setminus 1_B$, the free product group $A* B$ with generating set 
$X:=X_A \cup X_B$ has 
rational \wsphcs\ $\sphcs(A * B,X)$ 
if and only if $A=B=\mathbb{Z}/{2\mathbb{Z}}$.
Moreover, given any ordering of $X$ satisfying $a<b$ for all $a \in X_A$ and
$b \in X_B$, for the induced shortlex ordering
the associated \wsphcl\ $\sphcl(A * B,X)$ is regular 
if and only if $A=B=\mathbb{Z}/{2\mathbb{Z}}$.
} 

\medskip

The property of admitting a rational \wsphcs\ appears to be
extremely restrictive; indeed, Rivin~\cite[Conjecture~13.1]{rivin}
has conjectured that the only word hyperbolic groups that have a
rational \wsphcs\ are the virtually cyclic groups.
Theorem~\ref{thm:freeprodfinite} gives further evidence for 
this conjecture.

In \cite{HRR}, Holt, Rees, and R\"over also connect languages and conjugation 
in groups, but from a different perspective.
They consider the \textit{conjugacy problem} set associated a group $G$
with finite generating set $X$, 
i.e.~the set of ordered pairs $(u,v)$ such that $u$ and $v$ are 
conjugate in $G$.  They show that various notions of
context-freeness of this language can be used to
characterize the classes of 
virtually cyclic and virtually free groups. 

It is also natural to
consider all of the geodesics instead of a normal form set when
studying languages or growth series.
For each element $g$ of $G$, let $|g|(=|g|_X)$ denote
the length of a shortest representative word for $g$ over $X$.
A {\em geodesic}, then, is a word $w \in X^*$ with $l(w)=|\pi(w)|$.
The {\em length up to conjugacy} of $g$,
denoted by $|g|_c$, is defined to be 
\[|g|_c:=\min\{|h| \mid h \in [g]_c\}~,\]
and the length of the conjugacy
class is denoted by $|[g]_c|:=|g|_c$.
An element $w \in X^*$ satisfying $l(w)=|[\pi(w)]_c|$ 
will be called a {\em geodesic} of the pair $(G,X)$
{\em with respect to conjugacy}, or a {\em \geocon} word.
We define a new series 
 for the pair $(G,X)$ built from these words.  The set
\[ \geocl=\geocl(G,X):=\{w \in X^* \mid l(w)=|\pi(w)|_c\} \]
of {\geocon}s is the {\em \wgeocl}, and 
the strict growth series
\[ \geocs=\geocs(G,X) := f_{\geocl(G,X)} \]
is the {\em \wgeocs}, for $G$ over $X$.
In Section~\ref{sec:geodesic} we study this
language and series.

This \wgeocl\ is the canonical analog in the case of conjugacy
classes to the set
\[ \geol=\geol(G,X) := \{w \in X^* \mid l(w)=|\pi(w)|\} \]
of geodesic words, which we will call the {\em \wgeol} of
the group $G$ over $X$.  The corresponding strict growth
series will be called the {\em \wgeos}, and denoted
\[\geos = \geos(G,X) := f_{\geol(G,X)}.\]
See the paper by Grigorchuk and Nagnibeda~\cite[Section~6]{gn}
for a survey of results on this series.
For word hyperbolic groups, Cannon has shown 
that the \wgeol\ 
for every finite generating set is 
regular~\cite[Chapter~3]{echlpt}; that is, the group has
finitely many ``cone types''. 
 
Loeffler, Meier, and Worthington~\cite{lmw} have shown that
regularity of $\geol$ is
preserved by the graph product construction 
and rationality of $\geos$ is preserved by direct and free products. 
In Section~\ref{sec:geodesic} in Propositions~\ref{prop:geoword}
and~\ref{prop:geocon} we give several characterizations
of the geodesic words and \geocon\ words
for a graph product group in terms of the {\wgeol}s
and {\wgeocl}s for the vertex (factor) groups.
We use these to show that, unlike the
\sphr\ conjugacy case above, regularity of the 
pair of languages $\geocl$, $\geol$
is preserved when taking graph products.
 
\medskip

\noindent{\bf Theorem~\ref{thm:graphprodgcl}.}
{\em If $G$ is a graph product of groups $G_i$ with $1 \le i \le n$, and
each $G_i$ has a finite inverse-closed generating set $X_i$ such
that both $\geol(G_i,X_i)$ and $\geocl(G_i,X_i)$ are regular,
then both the \wgeol\ $\geol(G,\cup_{i=1}^n X_i)$ and
the \wgeocl\ 
$\geocl(G,\cup_{i=1}^n X_i)$ are also regular.
} 

\medskip

A corollary of Theorem~\ref{thm:graphprodgcl} is that for
every right-angled Artin group, right-angled Coxeter group,
and graph product of finite groups, with respect to
the ``standard'' generating set (that is, a union of the
generating sets of the vertex groups), the \wgeocl\ is regular 
and hence the
\wgeocs\ is rational. 

Rationality of {\wgeocs} is also preserved by
both free and direct products.  

\medskip

\noindent{\bf Theorem~\ref{thm:dirfreegcs}.}
{\em Let $G$ and $H$ be groups with finite inverse-closed generating
sets $A$ and $B$, respectively.  
Let $\geocs_G(z):=\geocs(G,A)(z)=\sum_{i=0}^\infty r_iz^i$ and 
$\geocs_H(z):=\geocs(H,B)(z)=\sum_{i=0}^\infty s_iz^i$ be the \wgeocs, 
and let $\geos_G:=\geos(G,A)$ and $\geos_H:=\geos(H,B)$
be the \wgeos\ for these pairs.

(i) The \wgeocs\ $\geocs_\times$ of the direct product 
$G \times H = \langle G,H \mid [G,H] \rangle$ of groups 
$G$ and $H$ with respect to the generating set $A \cup B$ is given by 
$\geocs_\times = \sum_{i=0}^\infty \delta_iz^i$
where $\delta_i:=\sum_{j=0}^i \binom{i}{j} r_j s_{i-j}$.

(ii) The \wgeocs\ $\geocs_*$ of the free product 
$G*H$ of the groups 
$G$ and $H$ with respect to the generating set $A \cup B$ is given by 
$$\geocs_* - 1 = (\geocs_G -1) + (\geocs_H - 1) -
  z \frac{d}{dz} \ln\left[ 1 - (\geos_G-1)(\geos_H-1) \right].$$
} 

\medskip

In our last result of Section~\ref{sec:geodesic},
we show in Proposition~\ref{prop:amalgprodfinite}
that geodesic conjugacy languages and series also
behave nicely for a free product with amalgamation of
finite groups 

\medskip

\noindent{\bf Proposition~\ref{prop:amalgprodfinite}.}
{\em If $G$ and $H$ are finite groups with a common subgroup $K$,
then the free product $G *_K H$ of $G$ and $H$ amalgamated over 
$K$, with respect 
to the generating set $X := G \cup H \cup K - \{1\}$,
has regular \wgeocl\ $\geocl(G *_K H,X)$
and rational \wgeocs\ $\geocs(G*_KH,X)$.}

\medskip

In Section~\ref{sec:open} we conclude with a few open questions.

\section{\sphr\  conjugacy series and languages}\label{sec:spherical}

In this section we collect information about the
closure properties of the class of pairs $(G,X)$
for which the \wsphcl\ is regular, or for which
the \wsphcs\ is rational. 

\begin{proposition}\label{prop:dirprodsph}
Let $G$ and $H$ be groups with finite inverse-closed
generating sets $X$ and $Y$, respectively. 
\begin{description}
\item[(i)] Let $<$ be a total ordering on $X \cup Y$ 
satisfying $x<y$ for all $x \in X$ and $y \in Y$;
we take all shortlex orderings to be defined from $<$
or its restriction to $X$ or $Y$.
If $\sphcl(G,X)$ and $\sphcl(H,Y)$ are regular,
then $\sphcl(G \times H, X \cup Y)$ is regular.
\item[(ii)] If $\sphcs(G,X)$ and $\sphcs(H,Y)$ are rational,
then $\sphcs(G \times H, X \cup Y)$ is rational.
\end{description}
\end{proposition}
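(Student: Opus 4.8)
The plan is to understand how conjugacy classes in a direct product $G \times H$ decompose, and then show the spherical conjugacy normal forms for $G \times H$ are built in a controlled way from those of the factors. The key structural fact is that in a direct product, two elements $(g_1, h_1)$ and $(g_2, h_2)$ are conjugate if and only if $g_1 \conj g_2$ in $G$ and $h_1 \conj h_2$ in $H$; that is, conjugacy classes of $G \times H$ are precisely products $c \times d$ of a conjugacy class $c$ of $G$ with a conjugacy class $d$ of $H$. So there is a natural bijection $(G \times H)/\conj \isom (G/\conj) \times (H/\conj)$.

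**Next I would** analyze the shortlex conjugacy normal form of a class $c \times d$ with respect to the given ordering. Because $X < Y$ and $G$, $H$ commute in the direct product, any word over $X \cup Y$ represents the same group element as the word obtained by moving all $X$-letters to the front and all $Y$-letters to the back (this is length-preserving), and its length is the sum of the numbers of $X$- and $Y$-letters. Since the ordering puts every $x \in X$ before every $y \in Y$, the shortlex-least representative of $c \times d$ should be the concatenation $z_c z_d$, where $z_c \in \sphcl(G,X)$ and $z_d \in \sphcl(H,Y)$ are the shortlex conjugacy normal forms of $c$ and $d$. I expect the bulk of the argument to be verifying this identification carefully: that $z_c z_d$ both represents an element of $c \times d$ and is shortlex-minimal among all words over $X \cup Y$ representing elements conjugate to it. Minimality of length follows from the additivity of length across the two alphabets together with minimality of $z_c$ and $z_d$ separately; the shortlex tie-breaking follows because any reordering that is not of the form ``all $X$ then all $Y$'' is strictly larger in the ordering, and among words of that form the lexicographic comparison reduces to comparing the $X$-parts and then the $Y$-parts.

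**Having established** the identification $\sphcl(G \times H, X \cup Y) = \sphcl(G,X) \cdot \sphcl(H,Y)$ as a concatenation of languages, part (i) follows immediately: the class of regular languages is closed under concatenation, so if both factor languages are regular then so is the product. For part (ii), the length of $z_c z_d$ is $l(z_c) + l(z_d)$, so the strict growth function of the product language is the convolution of the strict growth functions of the factors, giving $\sphcs(G \times H, X \cup Y) = \sphcs(G,X) \cdot \sphcs(H,Y)$ as a product of power series; since the product of two rational functions is rational, part (ii) follows. (Note that part (ii) does not actually require the ordering hypothesis, only the decomposition of conjugacy classes and additivity of length, whereas part (i) genuinely uses the ordering to pin down the normal forms as a clean concatenation.)

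**The main obstacle** will be the shortlex-minimality verification in the second step: one must rule out the possibility that some ``mixed'' word interleaving $X$- and $Y$-letters, or representing a different but conjugate element, could be shortlex-smaller than $z_c z_d$. This requires using both the commutation relations (to normalize any representative into separated form without increasing length) and the fact that conjugation in the product acts coordinatewise (so the $G$-part and $H$-part can be minimized independently), and then checking that the chosen ordering forces the separated form $X$-before-$Y$ to be the lexicographically least arrangement.
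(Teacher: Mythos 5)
Your proposal is correct and follows essentially the same route as the paper's proof: the paper likewise identifies $\sphcl(G \times H, X \cup Y) = \sphcl(G,X)\,\sphcl(H,Y)$ and then invokes closure of regular languages under concatenation for (i) and the product formula $\sphcs(G \times H, X \cup Y) = \sphcs(G,X)\,\sphcs(H,Y)$ for (ii). The paper states this decomposition without proof, so your careful verification of the coordinatewise conjugacy and shortlex-minimality of $z_c z_d$ simply fills in details the authors left implicit.
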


\begin{proof}
The \wsphcl\ for the direct product group $G \times H$,
viewed as the group generated by $G$ and $H$ (and hence
by $X \cup Y$) with relations $[g,h]=1$ for all $g \in G$
and $h \in H$, is given by
$\sphcl(G \times H, X \cup Y) = \sphcl(G,X) \sphcl(H,Y)$.
Thus (i) follows from the fact that regular
languages are closed under concatenation.
For part (ii), the \wsphcs\ are related by the
formula 
$\sphcs(G \times H, X \cup Y) = \sphcs(G,X) \sphcs(H,Y)$.
\end{proof}

Rivin~\cite[Theorem~14.6]{rivin} has shown that the 
\sphr\ conjugacy growth series of the free group on 
$k$ generators, with respect to a free basis, is not
a rational function.  Combining this with the fact that 
growth series
of regular languages are rational, then the
\sphr\  conjugacy language for any free group,
again with respect to a free basis, cannot be regular. 
In Proposition~\ref{prop:freegpscl} we give a proof that this 
language is not even context-free for the free group 
on two generators, which immediately
extends to the case of a group with a free factor in
Corollary~\ref{cor:freesubscl}.
(See~\cite{hu} for an exposition of the
theory of context-free languages.)

\begin{proposition}\label{prop:freegpscl}
Let $F=F(a,b)$ be a free group on generators $a, b$.
Then the \sphr\  conjugacy language $\sphcl(F, \{a^{\pm 1}, b^{\pm 1}\})$ 
is not context-free.
\end{proposition}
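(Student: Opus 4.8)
The plan is to argue by contradiction: assume $\sphcl := \sphcl(F,\{a^{\pm 1},b^{\pm 1}\})$ is context-free, and derive a contradiction by intersecting with a regular language and applying the pumping lemma for context-free languages. Since the class of context-free languages is closed under intersection with regular languages, it suffices to produce a regular language $R$ for which $L := \sphcl \cap R$ fails to be context-free. Before choosing $R$, I would record the elementary description of $\sphcl$ in a free group: a minimal-length element of a conjugacy class is a cyclically reduced word, the cyclically reduced representatives of a fixed class are precisely the cyclic permutations of one another, and hence the shortlex conjugacy normal form $z_c$ is the shortlex-least cyclic permutation of any cyclically reduced representative. Concretely, a word lies in $\sphcl$ if and only if it is cyclically reduced and is lexicographically least among its cyclic permutations (which all have the same length). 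Renaming the generators by an automorphism of $F$ that permutes $\{a^{\pm 1}, b^{\pm 1}\}$ changes $\sphcl$ only by a length-preserving letter renaming and so preserves context-freeness; I may therefore assume $a < b$ with $a \notin \{b, b^{-1}\}$.

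Next I would take $R = a b^* a b^* a b^*$, which is regular. For a positive word $a b^s a b^t a b^u$ the only cyclic permutations that can be lexicographically least are the three beginning with $a$, and since $a < b$ a block containing fewer $b$'s yields a lexicographically smaller word; hence such a word lies in $L$ exactly when the triple $(s,t,u)$ is lexicographically least among its three cyclic rotations. The essential point is that this \emph{minimal-rotation} condition couples the first block with \emph{both} of the other two simultaneously, which is precisely the kind of constraint no pushdown automaton can enforce.

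To make this precise I would apply the pumping lemma to the test words $w_n := a b^n a b^n a b^n$ for $n$ at least the pumping length $p$; each $w_n$ equals all of its $a$-initial rotations and is cyclically reduced, so $w_n \in \sphcl \cap R = L$. Writing $w_n = u v w x y$ with $|vwx| \le p$ and $|vx| \ge 1$, I note that because $n \ge p$ the factor $vwx$ meets at most one of the three letters $a$, so pumping either introduces or deletes a fourth/third $a$ (leaving $R$) or alters the $b$-counts of at most two \emph{adjacent} blocks. In the latter case the $b$'s of $vx$ cannot be spread equally across all three blocks, so at least one block retains length $n$ and the resulting triple is non-constant. I would then choose the pump direction to break minimality: if the first block is among those altered, pumping \emph{up} makes it strictly longer than an untouched block, so the triple is no longer lexicographically least; if the first block is untouched, pumping \emph{down} drives one of the remaining blocks strictly below $n$, again exposing a lexicographically smaller rotation. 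In every case the pumped word stays in $R$ and cyclically reduced but leaves $\sphcl$, contradicting the pumping lemma.

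I expect the main obstacle to be exactly this final bookkeeping: verifying that for every admissible decomposition at least one pump direction yields a non-minimal (yet still cyclically reduced) word, and in particular handling the delicate case where pumping up lengthens only the \emph{last} block — which does preserve minimality — by pumping down instead. This is also why three blocks are necessary: with only two blocks a balanced pump $a b^n a b^n \mapsto a b^{n+k} a b^{n+k}$ would remain minimal and produce no contradiction. An alternative that would streamline the case analysis is to invoke Ogden's lemma, marking the $b$'s of a single block to force the pumped factor into a controlled position; I anticipate either route succeeds, with the pumping (or Ogden) step as the crux and the language-theoretic reductions being routine.
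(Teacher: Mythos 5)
Your proposal is correct and takes essentially the same route as the paper's proof: assume context-freeness, intersect with a regular block language (using closure of context-free languages under intersection with regular languages), and apply the pumping lemma to the word whose blocks all have equal length $n$, contradicting the characterization of shortlex conjugacy normal forms in $F$ as lexicographically minimal cyclic rotations of cyclically reduced words. The only differences are implementation choices --- you intersect with $ab^*ab^*ab^*$ and pump $ab^nab^nab^n$, whereas the paper uses $a^+b^+a^+b^+$ and pumps $a^nb^na^nb^n$ --- and your singleton-$a$ separators in fact streamline the case analysis, since any pump touching an $a$ immediately exits the regular language.
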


\begin{proof}
Suppose that $\sphcl=\sphcl(F, \{a^{\pm 1}, b^{\pm 1}\})$ 
is context-free and consider the intersection $I=\sphcl \cap L$, 
where $L=a^+b^+a^+b^+$. Since $L$ is a regular language, and the 
intersection of a context-free language with a regular language 
is context-free (see \cite[p.~135, Theorem 6.5]{hu}), 
$I$ is context-free.  Suppose that $a<b$. Then all words 
in $I$ have the form
\begin{equation} \label{form}
a^p b^l a^q b^j \ \ \textrm{with} \ \ p\geq q, 
\end{equation}
and $I$ can be written as the union of the two disjoint sets
\begin{itemize}
\item[] $I_1=\{a^p b^l a^p b^j \mid p,l,j >0, l \leq j\}$
\item[] $I_2=\{a^p b^l a^q b^j \mid  p,l,j >0, p>q>0 \}$
\end{itemize}

Now let $k$ be the constant given by the pumping lemma (stated
below; see~\cite[Lemma~6.1, p.~125]{hu} for details) for 
context-free languages applied to the set $I$, and consider 
the word $W=a^n b^n a^n b^n$, where $n>k$.
One can see $W$ as composed of four 
blocks, the first block being $a^n$, the second being $b^n$ etc. 
Then by the pumping lemma $W$ can be written as $W=uvwxy$, 
where $l(vx)\geq 1$, $l(vwx) \leq k$, and $uv^iwx^iy  \in I$ 
for all $i \geq 0$. Since $l(vwx) \leq k<n$, $vwx$ cannot be 
part of more than two consecutive blocks.

In a first case, suppose that $vwx$ is just part of one block, 
i.e. $vwx$ is a 
power of $a$, or a power of $b$. If $vwx$ is in the first block, 
for $i=0$ one obtains a word that does not satisfy 
(\ref{form}). If it is in the second block then for $i>2$ one gets a 
word $uv^iwx^iy$ of the form $a^p b^l a^p b^j$, but $l >j$, 
so this word doesn't belong to either $I_1$ or $I_2$.
 If $vwx$ is in the third block, for $i>2$ $uv^iwx^iy$ does 
not have the form (\ref{form}).
  If $vwx$ is in the fourth block, for $i=0$ $uwy$ has the 
form $a^p b^l a^p b^j$, $j<l$, and so $uwy$ does not belong to $I$.

In a second case, suppose $vwx$ contains both $a$'s and $b$'s. 
If one of $v$ or $x$ 
contains both $a$ and $b$, then for $i>2$ the word 
$uv^iwx^iy$ contains many blocks alternating between 
powers of $a$ and $b$, and so
does not lie in $a^+b^+a^+b^+$. So $v$ has to be a power 
of one letter only, and $x$ a 
power of the other letter. If $v$ is in first block and 
$x$ in the second block, take 
$i=0$, and one gets a contradiction to (\ref{form}). 
If $v$ is in the second block and $x$ in the third, 
then for $i>2$, $uv^iwx^iy$ has the form $a^p b^l a^q b^j$ 
with $p<q$, which gives a contradiction to (\ref{form}) .
Finally, if $v$ is in the third and $x$ in fourth block, 
for $i>2$ the word $uv^iwx^iy$ does not satisfy (\ref{form}).

Hence none of these cases hold, giving the required contradiction.
\end{proof}

In fact, whenever a group $G$ with a finite inverse-closed
generating set contains a free subgroup $F$ 
such that there is a free basis $\{a_1,...,a_n\}$ of $F$ lying in $X$
and such that all elements of $\sphcl(F,\{a_i^{\pm 1}\})$
are also shortlex conjugacy normal forms for the pair $(G,X)$,
the proof above can be applied.  As a result, we obtain the following.

\begin{corollary}\label{cor:freesubscl}
Let $F$ be a free group with free basis $Z$ and
let $H$ be any group with finite
inverse-closed generating set $Y$.  
\begin{description}
\item[(i)] For the direct product group $F \times H$ the \wsphcl\ 
$\sphcl(F \times H,Z \cup Z^{-1} \cup Y)$ is not a
context-free language.
\item[(ii)] For the free product group $F * H$ the \wsphcl\ 
$\sphcl(F * H,Z \cup Z^{-1} \cup Y)$ is not a
context-free language.
\end{description}
\end{corollary}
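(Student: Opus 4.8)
The plan is to reduce Corollary~\ref{cor:freesubscl} to Proposition~\ref{prop:freegpscl} via the mechanism already flagged in the paragraph immediately preceding the statement: if the group contains a free subgroup $F$ with a free basis lying in the generating set, and if the shortlex conjugacy normal forms of $F$ are \emph{also} shortlex conjugacy normal forms of the ambient group, then the argument of Proposition~\ref{prop:freegpscl} applies verbatim. So the real work is to verify this compatibility condition in each of the two product constructions, and then to show that it forces non-context-freeness by intersecting with a suitable regular language.

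First I would fix a free basis $Z=\{a,b\}$ (it suffices to treat two generators, since a free group on more generators contains $F(a,b)$ as a factor), so that $F=F(a,b)$ with $Z\cup Z^{-1}=\{a^{\pm 1},b^{\pm 1}\}\subseteq X$, where $X=Z\cup Z^{-1}\cup Y$ is the generating set of the product. I would then choose the total order on $X$ so that the letters of $Z\cup Z^{-1}$ come first (and in the order $a<b<\ldots$ used in Proposition~\ref{prop:freegpscl}), with the $Y$-letters larger; this guarantees that any word written purely over $Z\cup Z^{-1}$ is shortlex-smaller than any equivalent word that uses a $Y$-letter. The key structural claim, to be proved separately for the direct product~(i) and the free product~(ii), is: for every conjugacy class $c$ of the product whose shortlex conjugacy normal form $z_c$ happens to lie in $(Z\cup Z^{-1})^*$, the word $z_c$ is exactly the shortlex conjugacy normal form of the corresponding conjugacy class of $F$.

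For the free product $F*H$, this is essentially the normal-form theory for free products: a cyclically reduced element written entirely in the $F$-letters is conjugate in $F*H$ only to its cyclic permutations (no $H$-letters can shorten it or enter its conjugacy class), and no shorter representative using $H$-letters exists, so length up to conjugacy is computed inside $F$ and the shortlex-minimal representative is the same as in $F$. For the direct product $F\times H$, the relevant classes are those with trivial $H$-component, i.e.\ $c=[g]_c$ with $g\in F$ and $h=1_H$; here a representative is a shuffle of an $F$-word and an $H$-word, and minimizing length up to conjugacy with $H$-component trivial forces the $H$-word to be empty and the $F$-word to be shortlex-minimal in $F$. In both cases I would conclude that the set $\{z_c\in(Z\cup Z^{-1})^*\}$ of product-normal-forms landing in the free letters coincides with $\sphcl(F,\{a^{\pm 1},b^{\pm 1}\})$.

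With the compatibility claim in hand, the finish mirrors Proposition~\ref{prop:freegpscl} closely. Since $(Z\cup Z^{-1})^*$ is regular and in particular the language $L=a^+b^+a^+b^+$ used before is regular over $X$, the intersection $\sphcl(\text{product})\cap L$ equals $\sphcl(F,\{a^{\pm 1},b^{\pm 1}\})\cap L$ exactly, because every element of $L$ is a $Z\cup Z^{-1}$-word and hence by the claim is a product-normal-form if and only if it is an $F$-normal-form. If the product's spherical conjugacy language were context-free, then by closure of context-free languages under intersection with regular languages (\cite[p.~135, Theorem 6.5]{hu}) this intersection would be context-free; but it is precisely the language shown non-context-free by the pumping-lemma argument in Proposition~\ref{prop:freegpscl}, a contradiction. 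I expect the main obstacle to be the careful verification of the compatibility claim, particularly handling the interaction of the shortlex ordering with cyclic permutations so that genuinely the same minimal word is selected in $F$ and in the product; once that is secured, the language-theoretic step is immediate.
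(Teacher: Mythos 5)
Your proposal follows essentially the same route as the paper's own proof: indeed, the paper's entire argument for Corollary~\ref{cor:freesubscl} is the remark preceding it, namely that once a free basis of $F$ lies in $X$ and every word of $\sphcl(F,\{a^{\pm 1},b^{\pm 1}\})$ remains a shortlex conjugacy normal form for the ambient pair $(G,X)$, the pumping argument of Proposition~\ref{prop:freegpscl} applies to $\sphcl(G,X)\cap a^+b^+a^+b^+$ verbatim. The compatibility verifications you sketch --- conjugacy of factor elements of a free product is realized inside the factor, so minimal-length conjugates of a cyclically reduced $F$-word are its cyclic permutations; and in the direct product the relevant classes have trivial $H$-component, so geodesic representatives contain no $Y$-letters --- are exactly the details the paper leaves unstated, and they are correct.

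One step, however, would fail as written: the parenthetical reduction to two generators. For part (ii) it is legitimate, since $F=F(a,b)*F'$ gives $F*H=F(a,b)*(F'*H)$, which is again a free product of the required shape with $H$ replaced by $F'*H$. But for part (i), the group $F\times H=(F(a,b)*F')\times H$ is neither a direct product nor a free product having $F(a,b)$ as a factor, so the rank-two case of (i) does not formally imply the general case. The gap is harmless because your direct-product argument never actually uses the rank: for a class $[(g,1_H)]$ with $g\in F$ cyclically reduced, deleting the $Y$-letters of any representative word (i.e., applying the projection $F\times H\to F$) gives a shorter or equal-length word over $Z\cup Z^{-1}$ representing the same element, so every geodesic representative of the class is a reduced word over $Z\cup Z^{-1}$, namely a cyclic permutation of the reduced word of $g$. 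Hence $\sphcl(F,Z\cup Z^{-1})\subseteq \sphcl(F\times H, X)$ for $F$ of arbitrary rank, and intersecting with $L=a^+b^+a^+b^+$, where $a<b$ are two basis letters, still yields exactly the language $I_1\cup I_2$ shown non-context-free in Proposition~\ref{prop:freegpscl} (that characterization only involves the relative order of $a$ and $b$, since all minimal-length conjugates of words in $L$ are positive words in $a,b$). So you should either drop the reduction and run your argument for arbitrary rank --- which also removes the need for your special choice of ordering on $X$ --- or restrict the reduction to part (ii) alone.
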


%
%
%

Since the \wsphcs\ for the 
free product of two infinite cyclic
groups is not rational, it is natural to
consider next the free product of two finite groups.

\begin{theorem}\label{thm:freeprodfinite}
For finite nontrivial groups $A$ and $B$ with generating sets $X_A=A\setminus 1_A$ 
and $X_B=B\setminus 1_B$, the free product group $A* B$ with generating set 
$X:=X_A \cup X_B$ has 
rational \wsphcs\ $\sphcs(A * B,X)$ 
if and only if $A=B=\mathbb{Z}/{2\mathbb{Z}}$.
Moreover, given any ordering of $X$ satisfying $a<b$ for all $a \in X_A$ and
$b \in X_B$, for the induced shortlex ordering
the associated \wsphcl\ $\sphcl(A * B,X)$ is regular 
if and only if $A=B=\mathbb{Z}/{2\mathbb{Z}}$.
\end{theorem}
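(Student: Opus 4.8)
The plan is to reduce the computation of $\sphcs(A*B,X)$ to a purely combinatorial count of conjugacy classes organized by length up to conjugacy, and then to analyze rationality of the resulting generating function. Write $a:=|X_A|=|A|-1$ and $b:=|X_B|=|B|-1$, and set $q:=ab$; since $A,B$ are nontrivial, $a,b\geq 1$, and $q=1$ precisely when $A=B=\Z/2\Z$. The first step is to observe that, because every nontrivial element of a factor is itself a generator, the geodesic length over $X$ of a reduced element $g_1\cdots g_k$ of $A*B$ (with the $g_i$ nontrivial and alternating between the factors) equals its syllable length $k$; consequently, for any $g$ the length up to conjugacy $|g|_c$ equals the syllable length of a cyclically reduced form of $g$. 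Thus $\sphcs(A*B,X)$ is exactly the generating function counting conjugacy classes by cyclically reduced syllable length.

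Next I would classify the conjugacy classes via the normal form theory for free products. The identity contributes $1$; the length-one classes are exactly the nontrivial conjugacy classes of $A$ together with those of $B$, contributing a term $c\,z$ where $c$ is the total number of such classes; and for $\ell\geq 1$ the classes of length $2\ell$ are represented by cyclically reduced (hence alternating, even-syllable) words of length $2\ell$, taken up to cyclic permutation of syllables. There are $2q^{\ell}$ linear alternating words of length $2\ell$, and a Burnside count over the cyclic group $C_{2\ell}$ — using that odd rotations fix no alternating word since $X_A\cap X_B=\emptyset$ — yields that the number of length-$2\ell$ classes is the $q$-ary necklace number
\[
N_{2\ell}=\frac{1}{\ell}\sum_{d\mid \ell}\varphi(\ell/d)\,q^{d}.
\]
Hence $\sphcs(A*B,X)=1+c\,z+F(z^2)$ where $F(t):=\sum_{\ell\geq 1}N_{2\ell}\,t^{\ell}$. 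Since adding a polynomial and substituting $t=z^2$ preserve (non)rationality — a rational even series is a rational function of $z^2$ — rationality of $\sphcs(A*B,X)$ is equivalent to rationality of $F$.

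When $q=1$ (so $A=B=\Z/2\Z$ and $A*B$ is the infinite dihedral group), we have $N_{2\ell}=1$ for all $\ell$, so $F(t)=t/(1-t)$ is rational and $\sphcs=1+2z+z^{2}/(1-z^{2})$. For the language statement here I would exhibit the shortlex conjugacy normal forms explicitly: with $a<b$ they are $\{a,b\}\cup\{(ab)^{\ell}\mid \ell\geq 0\}$, so $\sphcl(A*B,X)=\{a,b\}\cup(ab)^{*}$ is visibly regular. This settles both ``if'' directions.

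For the ``only if'' direction, suppose $q\geq 2$. Then $N_{2\ell}\le q^{\ell}/\ell+\ell\,q^{\ell/2}$, so $F$ has radius of convergence $1/q$, $N_{2\ell}\to\infty$, yet $N_{2\ell}q^{-\ell}\to 0$. The obstruction to rationality is exactly this $1/\ell$ decay: if $F$ were rational then for large $\ell$ its coefficients would have the form $N_{2\ell}=\sum_i p_i(\ell)\gamma_i^{\ell}$ with polynomials $p_i$ and $|\gamma_i|\le q$, where by Pringsheim's theorem (the coefficients being nonnegative) at least one dominant pole forces $|\gamma_i|=q$ with $p_i\neq 0$. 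Then $N_{2\ell}q^{-\ell}$ would be a nonzero exponential polynomial in $\ell$ whose bases lie on the unit circle, and such a function cannot tend to $0$, contradicting $N_{2\ell}q^{-\ell}\to 0$. Hence $F$, and therefore $\sphcs(A*B,X)$, is not rational; and since the strict growth series of a regular language is rational, $\sphcl(A*B,X)$ cannot be regular either. I expect the combinatorial classification with its Burnside count, together with the precise asymptotic $N_{2\ell}\sim q^{\ell}/\ell$ feeding the coefficient-structure obstruction, to be the heart of the argument — the latter being the step where care is needed to rule out cancellation among the unit-modulus terms. An equivalent and perhaps cleaner route is to use the closed form $F(t)=-\sum_{m\ge1}\tfrac{\varphi(m)}{m}\ln(1-qt^{m})$ and note that near $t=1/q$ only the $m=1$ summand is singular, giving $F$ a logarithmic singularity there, which no rational function (whose singularities are poles) can have.
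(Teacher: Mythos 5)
Your proposal is correct, and its combinatorial core --- geodesic length equals syllable length, length up to conjugacy equals cyclically reduced syllable length, and the count of even-length conjugacy classes by $q$-ary necklaces via Burnside's lemma, $N_{2\ell}=\frac{1}{\ell}\sum_{d\mid\ell}\varphi(\ell/d)q^{d}$ --- is exactly the computation in the paper's proof (there written $f(2r)=\frac{1}{r}\sum_{e\mid r}\phi(e)\alpha^{r/e}$ with $\alpha=|X_A||X_B|$). Where you genuinely diverge is the non-rationality mechanism. The paper differentiates: it forms $z\sphcs_G'(z)$, rearranges it as $\beta z+2\sum_{d}\phi(d)\frac{\alpha z^{2d}}{1-\alpha z^{2d}}$, uses the Weierstrass M-test to show this function is analytic on the unit disk apart from infinitely many poles (at the even roots of $1/\alpha$), and then applies the identity theorem: a rational function has finitely many poles, so infinitely many of these singularities would have to be removable, a contradiction. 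You instead keep the logarithmic closed form $F(t)=-\sum_{m\ge1}\frac{\varphi(m)}{m}\ln(1-qt^{m})$ and analyze a single point: for $q\ge 2$ the $m\ge 2$ tail is analytic in a disk of radius strictly larger than $1/q$ (the same M-test estimate), so $F$ has a logarithmic singularity at $t=1/q$ --- unbounded there, yet $(t-1/q)^{k}F(t)\to 0$ for every $k\ge 1$ --- which is incompatible with the only singularities a rational function can have, namely poles. This buys a shorter, more localized argument that avoids controlling an infinite family of poles accumulating at the unit circle and avoids the identity-theorem step; the paper's differentiation buys freedom from handling the logarithm at all, at the cost of that global analysis. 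Your alternative route via the asymptotics $N_{2\ell}\sim q^{\ell}/\ell$, Pringsheim, and the exponential-polynomial structure of coefficients of rational series is also sound in outline, but it rests on the fact that a nonzero exponential polynomial with unit-modulus bases cannot tend to zero --- true (e.g., by a Ces\`aro mean-square argument) but asserted rather than proved in your write-up --- so the logarithmic-singularity route is the one to keep as the finished proof.
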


\begin{proof}
Let $G:=A * B$ and 
let us assume that all letters in $X_A$ come before the letters in $X_B$ 
in a fixed ordering of $X$.  Notice that the set $\geol(G,X)$ of geodesics in
$G$ are simply the alternating words in $X_A$ and $X_B$.
To simplify notation, write $\sphcl_G:=\sphcl(G,X)$, $\sphcl_A:=\sphcl(A,X_A)$,
and $\sphcl_B:=\sphcl(B,X_B)$.
Then $\sphcl_A \subseteq X_A \cup \{\la\}$ and $\sphcl_B \subseteq X_B \cup \{\la\}$,
where $\la$ denotes the empty word. 
Since any word alternating over $X_A$ and $X_B$ can be cyclically conjugated
to a word of the same length starting with a letter in $X_A$, we have
$$\sphcl_G=\sphcl_A \cup \sphcl_B \cup  \sphcl_{AB},$$
where $\sphcl_{AB}$ is defined to be the set of words in $\sphcl_G$ 
that alternate between $X_A$ and $X_B$,  starting with a letter from $X_A$ 
and ending with a letter from $X_B$. 

Suppose first that $A=B=\Z/2\Z$.
Then $X_A=\{a\}$ and $X_B=\{b\}$ are singleton sets, and
$\sphcl_G=\{\la,a,b\} \cup \{(ab)^r \mid r \in \N\}$.
Hence $\sphcl_G$ is regular, and $\sphcs_G$ is rational.

For the remainder of this proof we assume that at least
one of the groups $A$ or $B$ has order at least 3.  
In order to analyze the \sphr\  conjugacy growth series of $G$, we follow the ideas 
in Theorems 14.2, 14.4 and 14.6 in \cite{rivin}.
We refer the reader to~\cite{conway} for details of 
complex analytic techniques used here.

Notice that all words in 
$\sphcl_{AB}$ have even length; we denote 
those words in  $\sphcl_{AB}$ of length $2r$ by $\sphcl_{AB,2r}$. 
Moreover, if we denote
the subset of $\geol(G,X)$ of alternating words of length $2r$
beginning with a letter in $X_A$ and ending with a letter 
in $X_B$ by $\geol_{AB,2r}$, and similarly for $\geol_{BA,2r}$, then
$|\geol_{AB,2r} \cup \geol_{BA,2r}|=2|X_A|^r|X_B|^r$ and the set
$\sphcl_{AB,2r}$ is in bijective correspondence with the
cyclic conjugacy classes of $\geol_{AB,2r} \cup \geol_{BA,2r}$.
That is, if we let $f(2r)=|\sphcl_{AB,2r}|$,
then $f(2r)$ is the number of orbits of the group 
$\mathbb{Z}/2r\mathbb{Z}$ acting by cyclic conjugation on the set
$\geol_{AB,2r} \cup \geol_{BA,2r}$. We can compute
$f(2r)$ by using Burnside's Lemma, which gives
\begin{eqnarray*}
f(2r)&=&\frac{1}{2r}\sum_{g \in \mathbb{Z}/2r\mathbb{Z}} |Fix(g)|\\
&=&\frac{1}{2r}\sum_{2|d|2r} ~~\sum_{g \in \Z/2r\Z, \gcd(g,2r)=d} |Fix(g)|\\
\end{eqnarray*}
where the second equality uses the fact that an
odd element of $\Z/2r\Z$ cannot permute an even length alternating
word to itself.
Now for any $g \in \Z/2r\Z$ with $2|d=gcd(g,2r)$
and any alternating word $w \in Fix(g)$, we have $w=v^{2r/d}$ for
an alternating word $v \in \geol_{AB,d} \cup \geol_{BA,d}$.
There are $2|X_A|^{d/2}|X_B|^{d/2}$ such words.  Also using the
fact that $\phi(\frac{2r}{d})=|\{g \le 2r \mid \gcd(g,2r)=d\}|$,
where $\phi$ is the Euler totient function, yields 
\begin{eqnarray*}
f(2r)
&=&\frac{1}{r}\sum_{2|d|2r} \phi(\frac{2r}{d})|X_A|^{d/2}|X_B|^{d/2}\\
&=&\frac{1}{r}\sum_{e|r} \phi(\frac{r}{e})|X_A|^{e}|X_B|^{e}\\
&=&\frac{1}{r}\sum_{e|r} \phi(e)|X_A|^{r/e}|X_B|^{r/e}~.\\
\end{eqnarray*}

Let $\sphcs_G:=\sphcs(G,X)$.  Now we have
\begin{eqnarray*}
\sphcs_G(z)&=& 1 + |\sphcl_A \cup \sphcl_B \setminus \{\la\}|z+
              \sum_{r=1}^\infty |{\sphcl_{AB,2r}}|z^{2r}\\
&=& 1 + |\sphcl_A \cup \sphcl_B \setminus \{\la\}|z+
  \sum_{r=1}^\infty \frac{1}{r}\sum_{e|r} \phi(e)(|X_A||X_B|)^{r/e} z^{2r}~.\\
\end{eqnarray*}
To simplify notation, let $\alpha:=|X_A||X_B|$ and 
$\beta:=|\sphcl_A \cup \sphcl_B \setminus \{\la\}|$.
Formally taking the derivative of this power series and multiplying by $z$ gives
\[z\sphcs_G'(z)
=\beta z+
  2\sum_{r=1}^\infty \sum_{e|r} \phi(e)\alpha^{r/e} (z^{2})^r~.\]
Rearrange the terms of this formal power series
(e.g. as in \cite[Theorem~14.4]{rivin} with $c_n=\phi(n)$ and $b_n=\alpha^{n}$)
to obtain
\[z\sphcs_G'(z)=\beta z+
  2\sum_{d=1}^\infty  \phi(d) \sum_{n=1}^\infty \alpha^{n} (z^{2d})^n~.\]

Now $\alpha=|X_A||X_B|$ and at least one 
of the sets $X_A,X_B$ contains more than one element.
Hence $\alpha>1$.  For any real number $r>0$ and any point $p \in \C$,
let $D_r(p)$ denote
the open disk of the  complex plane
defined by $D_r(p):=\{z \mid |p-z|<r\}$.  Since 
$\sum_{n=1}^\infty \alpha^{n} (z^{2d})^n=\frac{\alpha z^{2d}}{1-\alpha z^{2d}}$
for all $z$ in the disk $D_{1/\alpha}(0)$,
we have $z\sphcs_G'(z)=h(z)$ in this disk,
where
\[h(z):=\beta z+
  2\sum_{d=1}^\infty  \phi(d) \frac{\alpha z^{2d}}{1-\alpha z^{2d}}~.\]
  
For any $0<\epsilon<1$ there are only finitely many
even roots of $\frac{1}{\alpha}$ in the closed disk 
$\overline{D_{1-\epsilon}(0)}$; denote these roots by
$z_1,...,z_k$.
(In particular, all $2d$-th roots of $\frac{1}{\alpha}$
in this closed disk
must satisfy $d \le -\ln(\alpha)/2\ln(1-\epsilon)$.)
Let 
\[\epsilon':=\frac{\epsilon}{3} \min\left[ \{1-\epsilon-|z_i| \mid 1 \le i \le k\}
  \cup   \{d(z_i,z_j) \mid i \neq j\} \right],
\]
and let $R$ be the region of the complex plane defined by
$R:=\overline{D_{1-\epsilon}(0)} \setminus \cup_{i=1}^k D_{\epsilon'}(z_i)$.
Then for all $z \in R$ the value of $|1-\alpha z^{2d}|$ must
be strictly greater than $0$.  Since $R$ is compact, then
there is a $\delta>0$  such that for all $z$ in
the region $R$, 
we have
$|1-\alpha z^{2d}| \ge \delta$.
Now 
$|\phi(d) \frac{\alpha z^{2d}}{1-\alpha z^{2d}}| \le  
d \frac{\alpha}{\delta} (1-\epsilon)^{2d}$.
Since the series
$\sum_{d=1}^\infty d \frac{\alpha}{\delta} (1-\epsilon)^{2d}$
converges to a finite number, the Weierstrass M-test~\cite[II.6.2]{conway}
says that the series $h(z)$ is uniformly convergent on this region $R$.
Since each partial sum in the expression
for $h$ is continuous on $R$, then $h$ is also continuous
on this region~\cite[II.6.1]{conway}, and since each partial sum
is analytic, the function $h$ is also analytic on $R$~\cite[VII.2.1]{conway}.
Allowing $\epsilon$ to shrink to 0, this shows that
$h$ is analytic on the disk $D_1(0)$ outside of the infinite set of points
that are $2d$-th roots of $\frac{1}{\alpha}$.
Moreover, whenever $y$ is a $2d_0$-th root of $\frac{1}{\alpha}$, 
a similar argument shows that the sum
$\sum_{d=1,d \neq d_0}^\infty  \phi(d) \frac{\alpha y^{2d}}{1-\alpha y^{2d}}$
is analytic, and so the function $h$ has a pole at the point $y$.
That is, $h(z)$ is analytic on the unit disk except for
infinitely many poles.

Now assume that the power series $z \sphcs_G'(z)$ is a rational
function $p(z)$.
Then $p$ must be analytic in the unit disk
outside of finitely many poles.  Let $R'$ be the unit disk
with all of the poles of both $h$ and $p$ removed.
Then we have $p$ and $h$ are analytic on $R'$, and
$p=h$ in an open disk $D_{1/\alpha}(0)$; thus 
$p=h$ on $R'$~\cite[IV.3.8]{conway}.
This shows that infinitely many of the poles of $h$
must be removable singularities, which is a contradiction.

Thus $z\sphcs_G'(z)$ cannot be
a rational function.  Therefore the function 
$\sphcs_G'$ also is not rational.  Since the
derivative of any rational function is also rational,
this shows that $\sphcs_G$ 
also is not a rational function.  Since the growth series
of the set $\sphcl_G$ is not rational in this case, we must also
have that this set is not a regular language.
\end{proof}

\section{Geodesic conjugacy series and languages}\label{sec:geodesic}

Given a finite simplicial graph with a group attached to each vertex, 
the associated \textit{graph product} is the group generated by  
the vertex groups with the added relations that elements of distinct 
adjacent vertex groups commute.
This construction often preserves geometric, algebraic or algorithmic 
properties of groups.  In particular Loeffler, Meier, and 
Worthington~\cite{lmw} have shown that regularity of the full
language $\geol$ of geodesics (with respect to the union of the generating
sets for the vertex groups) is preserved by the graph product
construction.  
Theorem~\ref{thm:graphprodgcl} 
gives a further illustration of this behavior,
showing that regularity of the pair of
languages $\geol$ and $\geocl$ 
is preserved by graph products. 

\begin{theorem}\label{thm:graphprodgcl}
If $G$ is a graph product of groups $G_i$ with $1 \le i \le n$, and
each $G_i$ has a finite inverse-closed generating set $X_i$ such
that both $\geol(G_i,X_i)$ and $\geocl(G_i,X_i)$ are regular,
then both the \wgeol\ $\geol(G,\cup_{i=1}^n X_i)$ and
the \wgeocl\ 
$\geocl(G,\cup_{i=1}^n X_i)$ are also regular.
\end{theorem}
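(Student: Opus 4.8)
The plan is to reduce both regularity statements to the combinatorial descriptions of geodesic and \geocon\ words supplied by Propositions~\ref{prop:geoword} and~\ref{prop:geocon}, and then to realize those descriptions by finite state automata assembled from the (finite) automata recognizing the vertex group languages $\geol(G_i,X_i)$ and $\geocl(G_i,X_i)$, invoking the standard closure properties of regular languages (finite union, intersection, complement, and preservation of regularity under passing to cyclic permutations). I would dispose of $\geol(G,\cup_{i=1}^n X_i)$ first. By Proposition~\ref{prop:geoword}, a word over $\cup_{i=1}^n X_i$ is geodesic in $G$ exactly when, after the commutation relations are used to gather letters into syllables, each syllable lies in $\geol(G_i,X_i)$ for its vertex group and no two syllables from a common vertex group can be shuffled together. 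I would build a DFA that tracks, for all vertex groups $G_i$ simultaneously, the state in the automaton for $\geol(G_i,X_i)$ of the unique currently \emph{open} $X_i$-syllable (or a marker that none is open). Reading a letter $a\in X_i$ advances the open $G_i$-syllable and closes every open syllable of a vertex group not commuting with $G_i$; since geodesic languages are prefix-closed, the machine may reject as soon as the word of an open syllable fails to lie in $\geol(G_i,X_i)$. As there are finitely many vertex groups and each vertex automaton is finite, this is a finite automaton, recovering the result of Loeffler, Meier and Worthington~\cite{lmw}.

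For $\geocl(G,\cup_{i=1}^n X_i)$ I would use Proposition~\ref{prop:geocon}, which describes a \geocon\ word as a geodesic word that is in addition cyclically reduced, subject to an extra condition on the syllables lying in vertex groups that commute with the entire support. I would stratify the argument by the support $S\subseteq\{1,\dots,n\}$ of the word: the set of words with support exactly $S$ is regular, and whether a vertex group is \emph{central} for $S$ (commutes with every other group occurring in $S$) is then determined by $S$ alone. Over a fixed stratum, a geodesic word is a \geocon\ word iff (A) every cyclic permutation of it is again geodesic in $G$, and (B) each syllable whose vertex group is central for $S$ is a conjugacy geodesic of that vertex group, i.e.\ lies in $\geocl(G_i,X_i)$. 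The non-central syllables need only be geodesic, matching the free product behaviour recorded in Theorem~\ref{thm:dirfreegcs}(ii), whereas the direct product behaviour of Theorem~\ref{thm:dirfreegcs}(i) forces the central syllables to be genuinely conjugacy-minimal.

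Both conditions (A) and (B) are recognized by finite automata. For (A) I would invoke the classical fact that if a language is regular then so is the set of words all of whose cyclic permutations lie in it; concretely, from the DFA for $\geol(G,\cup_{i=1}^n X_i)$ produced in the first step one forms, by the standard ``guess the cut point and the state reached there'' construction, an automaton accepting exactly the cyclically geodesic words. For (B) I would augment the syllable-tracking automaton of the first step so that, for each central vertex group, it checks the open syllable against the automaton for $\geocl(G_i,X_i)$ (regular by hypothesis) in place of the one for $\geol(G_i,X_i)$. Then $\geocl(G,\cup_{i=1}^n X_i)$ is the finite union over all supports $S$ of the intersection of these regular conditions, so regularity follows from closure under finite union and intersection.

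The main obstacle is the cyclic nature of conjugacy: a left-to-right automaton naturally verifies linear geodesy but not the wrap-around cancellations, and a central syllable may be split between the end and the beginning of the word. The device that removes this obstacle is precisely the preservation of regularity under cyclic permutation, which converts the inherently cyclic requirements (cyclic reducedness, and the test of a possibly-wrapped central syllable) into conditions on ordinary linear regular languages. What remains is bookkeeping that must be done carefully: identifying, as a function of $S$, which syllables are to be tested in $\geocl(G_i,X_i)$ rather than $\geol(G_i,X_i)$, and arranging that the automaton for cyclic geodesy and the automaton for the central-syllable conjugacy condition are run on the same cyclic rotation, so that their intersection captures exactly $\geocl(G,\cup_{i=1}^n X_i)$.
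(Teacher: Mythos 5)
Your proposal is correct, and for the geodesic language it is essentially the paper's argument in automaton-theoretic dress: your syllable-tracking product automaton recognizes exactly $\bigcap_{i=1}^n \pi_i^{-1}(\geol_i(\$ \geol_i)^*)$, which is precisely how the paper deduces regularity of $\geol(G,X)$ from Proposition~\ref{prop:geoword} (closure of regular languages under inverse monoid homomorphism and finite intersection). Where you genuinely diverge is the conjugacy part. The paper stays coordinatewise: by Proposition~\ref{prop:geocon}, $\geocl(G,X)=\bigcap_{i=1}^n \pi_i^{-1}(\geocl_i \cup \widetilde U_i)$, so the only thing left to prove is that each $\widetilde U_i$ is regular, and this is done by splitting the DFA for $\geol_i$ at an intermediate state, writing $\widetilde U_i=\bigcup_{j} \overline{L_j}\$(\geol_i\$)^*L_j$ --- the ``guess the cut and the state'' device applied inside each alphabet $X_i\cup\{\$\}$. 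You instead apply that device globally: stratify by support $S$, single out the vertex groups central for $S$, characterize {\geocon} words by (A) all cyclic permutations geodesic in $G$ plus (B) central projections in $\geocl_i$, and conclude by cyclic-shift closure and product automata. Your route buys a more transparent picture (cyclic reduction handles the free-product directions; genuine conjugacy minimality is needed only in the direct-product directions), at the cost of an extra reduction that you assert but do not prove, namely the equivalence of (A)$\wedge$(B) with condition (iii) of Proposition~\ref{prop:geocon}; that equivalence is true and short, but it is where the content of your reduction lies, so it must be written out. For central $i\in S$ the word $\pi_i(y)$ contains no $\$$, so (iii) forces $\pi_i(y)\in\geocl_i$, which is (B); for non-central $i\in S$, writing $\pi_i(y)=u_0\$ u_1\cdots\$ u_m$, geodesy of $y$ gives $u_1,\dots,u_{m-1}\in\geol_i$, and applying Proposition~\ref{prop:geoword} to the rotation $qp$ of $y=pq$, where $p$ is the prefix of $y$ ending just before the first letter sent to $\$$ by $\pi_i$, gives $\pi_i(qp)=\$ u_1\cdots\$ (u_mu_0)$ and hence the wrap-around condition $u_mu_0\in\geol_i$; conversely every rotation of a {\geocon} word is again a {\geocon} word, hence geodesic. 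Two further bookkeeping cautions: the early-rejection device is legitimate only for the prefix-closed languages $\geol_i$ and not for $\geocl_i$ (which need not be prefix-closed), so the central-group test must be performed only at the end of the word --- harmless, since a central syllable is never closed before the word ends; and the universal quantifier in (A) requires the complement-of-cyclic-shift-of-complement form of the closure fact, not the bare existential ``guess the cut'' construction.
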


Before proceeding with the proof of Theorem~\ref{thm:graphprodgcl}, we need  
some preliminary notation and results.
Let $\lm$ be a finite simplicial graph with $n$ vertices $v_1,...,v_n$
and suppose that for each $1 \le i \le n$ the vertex $v_i$ is labeled by
a group $G_i$ that has a finite inverse-closed generating set $X_i$ 
with \wgeol\ $\geol_i:=\geol(G_i,X_i)$ and \wgeocl\ $\geocl_i:=\geocl(G_i,X_i)$.
(Note that two vertices of $\lm$ are considered to be adjacent here if
the vertices are distinct and joined by an edge.)  
For two words $u,w \in X_i^*$, we write
$u \ei w$ if $u$ and $w$ represent the same element of $G_i$,
and $u \sim_i w$ if $u$ and $w$ represent conjugate elements of $G_i$. 
Let $G$ be the associated graph product
with generating set $X:=\cup_{i=1}^n X_i$.  For
words $y,z \in X^*$, write $y=_Gz$ if $y$ and $z$ represent
the same element of $G$, and $y \sim_G z$ if $y$ and $z$ represent
conjugate elements of $G$.
Also let $\geol$ and $\geocl$ denote the 
\wgeol\ and \wgeocl, respectively, for $G$ over $X$.

For each $i$, we define the {\em centralizing set} $C_i$ to be
the union of the sets $X_j$ such that the vertices $v_i$ and
$v_j$ are adjacent in the graph $\lm$. 
Given a word $w=a_1 \cdots a_m$ with each $a_i \in X$, 
the {\em centralizing set} $C(w)$ associated to $w$
is defined by $C(w):=\cap_{i=1}^m C_{j_i}$, where
for each $1 \le i \le m$, the letter $a_i$ lies in
the set $X_{j_i}$.  That is, $C(w)$ is the subset
of $X$ that commutes with every letter of $w$ from
the graph product construction.

We define several types of rewriting operations on
words over $X$ as follows.
\begin{description}
\item[(x0)] {\em Local reduction}: $yuz \ra ywz$ with $y,z \in X^*$,
$u,w \in X_i^*$, $u=_{G_i}w$, and $l(u)>l(w)$.
\item[(x1)] {\em Local exchange}: $yuz \ra ywz$ with $y,z \in X^*$,
$u,w \in X_i^*$, $u=_{G_i}w$, and $l(u)=l(w)$.
\item[(x2)] {\em Shuffle}: $yuwz \ra ywuz$ with $y,z \in X^*$,
$u \in X_i^*$, $w \in X_j^*$, and $v_i,v_j$ adjacent in $\lm$.
\item[(x3)] {\em Cyclic conjugation}: $yz \ra zy$ with $y,z \in X^*$.
\item[(x4)] {\em Conjugacy exchange}: $uy \ra wy$ with $y \in X^*$,
$u,w \in X_i^*$, $X_i \subseteq  C(y)$, $u \sim_i w$, and $l(u)=l(w)$.
\item[(x5)] {\em Conjugacy reduction}: $uy \ra wy$ with $y \in X^*$,
$u,w \in X_i^*$, $X_i \subseteq  C(y)$, $u \sim_i w$, and $l(u)>l(w)$.
\end{description}

For $0 \le j \le 5$, we write $y \sr{xj}{\ra} z$ if
$y$ is rewritten to $z$ with a single application of operation (x$j$).
Given a subset $\alpha$ of $0-5$, we write $y \sr{x\alpha *}{\ra} z$
if $z$ can obtained from $y$ by a finite (possibly zero) number
of rewritings of the types (x$j$) with $j$ in the set $\alpha$.  
Note that rewriting operations (x1)-(x4) preserve word
length, and (x0), (x5) decrease word length.

A word $y \in X^*$ is {\em \tri} if whenever $y \sr{x0-2*}{\ra} z$,
no operation of type (x0) can ever occur.  The word $y$ is
{\em \ct} if whenever $y \sr{x0-5*}{\ra} z$, no operation 
of type (x0) or (x5) can occur.

For each $i$, we define a monoid homomorphism
$\pi_i:X^* \rightarrow (X_i \cup \{\$\})^*$, 
where $\$$ denotes a letter not in $X$, by
defining 
\[\pi_i(a) := \begin{cases}
a & \mbox{if } a \in X_i \\
\$& \mbox{if } a \in X \setminus(X_i \cup C_i) \\
1 & \mbox{if } a \in C_i\\
\end{cases}\]

Given any subset $t$ of $X$, we define the {\it support}
$supp(t)$ of $t$ to be the set of all vertices $v_i$ of $\lm$ such that
$t$ contains an element of $X_i$. Let 
$T$ be the set of all nonempty subsets $t$ of $X$ satisfying the
properties that $supp(t)$ is a clique of $\lm$, and for each $v_i \in supp(t)$, 
the intersection $t \cap X_i$ is a single element of $X_i$.
Note that for each $t=\{a_1,...,a_k\} \in T$, whenever
$a_{i_1},...,a_{i_k}$ is another arrangement of the
letters in $t$, then
$a_1 \cdots a_k =_G a_{i_1} \cdots a_{i_k}$.
Hence $t$ denotes a well-defined element of $G$.
Also note that for each $a \in X$, we have $\{a\} \in T$,
and $a$ is the element of $G$ associated to $\{a\}$.
By slight abuse of notation, we will consider
$X \subseteq T \subseteq G$.  Now $T$ is another
inverse-closed generating set for $G$.

\begin{example}\label{ex:t}
{\em
For the graph product of three 
infinite cyclic groups $G_i=\langle a_i \rangle$ ($1 \le i \le 3$),
if the only adjacent pair of vertices is $v_2,v_3$, 
then the graph product group $G=G\lm=G_1 * (G_2 \times G_3)$
has generating sets \\
$X=\{a_1, a_1^{- 1}, a_2, a_2^{- 1}, a_3, a_3^{- 1},\}$
and \\
$T=\{~\{a_1\}, \{a_1^{- 1}\}, \{a_2\}, \{a_2^{- 1}\}, \{a_3\}, \{a_3^{- 1}\},
\{a_2a_3\}, \{a_2^{-1}a_3\}, \{a_2a_3^{-1}\}, \{a_2^{-1}a_3^{-1}\}~\}$.
}
\end{example}

Analogous to the operations above on words over $X$,
we define three sets of rewriting rules on words over $T$
as follows.  
For each index $i$ fix a total ordering on 
$X_i$, and let $<_i$
denote the corresponding shortlex ordering on $X_i^*$.
To ease notation, we
let the empty set $\{ \}$ denote the empty word over $T$.
Whenever $t \in T \cup \{\emptyset\}$, $a \in X$, $t \cup \{a\} \in T$,
and $t \cap \{a\} = \emptyset$, 
we let $\{a,t\}$ denote the set $t \cup \{a\}$.  
\begin{description}

\item[(R0)] 
$\{a_1,t_1\} \cdots \{a_m,t_m\} \ra
\{b_1,t_1\} \cdots \{b_{k},t_{k}\}t_{k+1} \cdots t_m$ whenever
there is an index $1 \le i \le n$ such that for each $1 \le j \le m$,
$a_j,b_j \in X_i$, $t_j \in T \cup \{\emptyset\}$, and
$\{a_j,t_j\} \in T$; $k<m$; and
$a_1 \cdots a_m \ei b_1 \cdots b_{k}$.

\item[(R1)] $\{a_1,t_1\} \cdots \{a_m,t_m\} \ra
\{b_1,t_1\} \cdots \{b_{m},t_{m}\}$ whenever
there is an index $1 \le i \le n$ such that for each $1 \le j \le m$,
$a_j,b_j \in X_i$, $t_j \in T \cup \{\emptyset\}$,
$\{a_j,t_j\} \in T$; 
$a_1 \cdots a_m \ei b_1 \cdots b_m$; and
$a_1 \cdots a_m >_i b_1 \cdots b_m$.

\item[(R2)] $t\{a,t'\} \ra \{a,t\}t'$ whenever
$a \in X$, $t \in T$, $t' \in T \cup \{\emptyset\}$,
and $\{a,t\}, \{a,t'\} \in T$.
\end{description}
Also in analogy with the operations above on $X^*$,
whenever $0 \le j \le 2$, $\alpha \subseteq \{0,1,2\}$ and $w,x \in T^*$,
we write $w \sr{Rj}{\ra} x$ if $w$ rewrites to $x$ via
exactly one application of rule (R$j$), and
$w \sr{R\alpha*}{\ra} x$ if $x$ can be obtained
from $w$ by a finite (possibly zero) number of rewritings
using rules of the type (R$j$) for $j \in \alpha$.

Let $R$ denote the set of all rewriting rules of the
form (R0), (R1), and (R2).  Note that the generating
set $T$ of $G$ together with the relations given by
the rules of $R$ form a monoid presentation of $G$;
hence, $(T,R)$ is a {\it rewriting system} for the
graph product group $G$.  We refer the reader 
to Sims' text \cite{sims} for definitions and
details on rewriting systems for groups which we use 
in this section.

Define a partial ordering on $T$ by $\{a,t\} > \{b,t\}$ whenever
$a,b \in X_i$ for some $i$, $t \in T \cup \emptyset$,
$\{a,t\} \in T$, and $a >_i b$; and by
$\{a,t\} < t$ whenever $a \in X$ and $t, \{a,t\} \in T$.
For each $t$ in $T$, define the {\em weight} $wt(t)$
of $t$ to be the number of elements of $t$ as a
subset of $X$ (equivalently, $wt(t)$ is the
number of vertices in $supp(t)$).
Then all of the
rules in the rewriting system $R$ decrease the associated
weightlex ordering on $T^*$.  Since the weightlex ordering
is compatible with concatenation and well-founded,
no word $w \in T^*$
can be rewritten infinitely many times; that is, the
system $R$ is {\it terminating}.
One can check via the Knuth-Bendix algorithm~\cite[Chapter~2]{sims}
that this system is also {\it confluent}, i.e., that
whenever
a word $w$ rewrites two words $w \ra w'$ and $w \ra w''$
using these rules, then there is a word $w'''$ such
that $w' \sr{R0-2*}{\ra} w'''$ and $w' \sr{R0-2*}{\ra} w'''$;
this proof is provided in Appendix~\ref{app}.
(An alternative proof of this can be found in~\cite[Theorem~C]{hm}.
See Example~\ref{ex:racg} below for details of 
this rewriting system for an example of a right-angled
Coxeter group.)

Hence for each word $w \in X^*$ there is a unique 
word $irr(w)$ in $T^*$ that is irreducible (i.e.~cannot be rewritten)
such that $w \sr{R0-2*}{\ra} irr(w)$, and each
element $g \in G$ is represented by a unique
word $irr(g)$ in $T^*$ that is irreducible 
with respect to the rewriting rules in $R$~\cite[Prop.~2.4, p.~54]{sims}.
That is, the set 
\[ irr(R) := \{irr(g) \mid g \in G\}
\]
is a set of weightlex normal forms for $G$ over $T$.

For each $t \in T$, let $a_1,...,a_k$ be 
a choice of an ordered listing of the elements of
the set $t$, and let $h(t):=a_1 \cdots a_k$.
Then $h$ determines a monoid homomorphism $h:T^* \ra X^*$.
For each $w \in X^*$, let $\tht(w):=h(irr(w))$. 
Then the set 
\[
h(irr(R)) = \{\tht(w) \mid w \in X^*\}
\]
is
a set of normal forms for $G$ over the original
alphabet $X$.

In Proposition~\ref{prop:geoword} below, we  show that 
the geodesic words for the graph product group $G$ over $X$
are exactly the trimmed words, and can be characterized
as an intersection of homomorphic inverse images via these
$\pi_i$ maps.  Although the equivalence of (i), (ii), and (iii)
of this Proposition follows from results in~\cite{hm} and~\cite{lmw}, 
we include some details here for a condensed exposition
and because the further equivalence with (iv) 
will be needed for our proof of Proposition~\ref{prop:geocon} below.

\begin{proposition}\label{prop:geoword}  
Let $G$ be a graph product of the groups $G_i$ for $1 \le i \le n$,
Let $X=\cup_{i=1}^n X_i$ where $X_i$ is a finite inverse-closed
generating set for $G_i$ and $\geol_i:=\geol(G_i,X_i)$
for each $i$,
and let $y \in X^*$.  The following are equivalent.
\begin{description}
\item[(i)] $y$ is a geodesic word for $G$ with respect to $X$.
\item[(ii)] $y$ is a \tri\ word over $X$.
\item[(iii)] $y \in \cap_{i=1}^n \pi_i^{-1}(\geol_i(\$ \geol_i)^*)$.
\item[(iv)] $y \sr{x1-2*}{\ra} \tht(y)$.
\end{description}
\end{proposition}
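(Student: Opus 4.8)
The plan is to let the normal-form map $\tht$ carry the whole argument: conditions (i), (ii), (iii) are the standard shuffle/block description of geodesics in a graph product, so the new content is the bridge to (iv), and $\tht(y)$ is the natural object linking all four. I would first record two facts about $\tht$ and then deduce the equivalences as a short cycle, folding in (iii) at the end through the block structure of the maps $\pi_i$.

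The first fact is that $\tht(y)=h(irr(y))$ is a geodesic representative of $\pi(y)$, i.e.\ $\tht(y)=_G y$ and $l(\tht(y))=|\pi(y)|$. This holds because every rule of $R$ preserves or (for (R0)) strictly decreases the total weight $\sum_j wt(t_j)$ of a word in $T^*$, and this total weight equals the length over $X$ of the $h$-image; since the inclusion $X\hookrightarrow T$ turns an $X$-word of length $\ell$ into a $T$-word of weight $\ell$, the irreducible form $irr(y)$ has minimal weight among all $T$-representatives, forcing $l(\tht(y))=|\pi(y)|$. The second fact is that $y\sr{x0-2*}{\ra}\tht(y)$ for every $y$. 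I would prove this by lifting the rewriting $y\sr{R0-2*}{\ra}irr(y)$ through $h$: an (R2) step lifts to shuffles (x2) (the transferred generator commutes with the clique it joins), an (R1) step lifts to shuffles bracketing one local exchange (x1), and an (R0) step lifts to shuffles bracketing one local reduction (x0). This lifting is the main technical obstacle, since each case requires checking that the intervening generators lie in the relevant centralizing set $C_i$ and may therefore be shuffled past.

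With these two facts the equivalence (i)$\,\Leftrightarrow\,$(ii)$\,\Leftrightarrow\,$(iv) is immediate. For (i)$\,\Rightarrow\,$(ii): every word occurring in a rewriting $y\sr{x0-2*}{\ra}z$ represents $\pi(y)$ and so has length at least $|\pi(y)|=l(y)$; as (x1),(x2) preserve length, a first application of (x0) would force the length strictly below $|\pi(y)|$, which is impossible, so no (x0) occurs and $y$ is trimmed. For (ii)$\,\Rightarrow\,$(iv): the second fact gives a sequence $y\sr{x0-2*}{\ra}\tht(y)$, and trimmedness forbids any (x0) in it, so in fact $y\sr{x1-2*}{\ra}\tht(y)$. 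For (iv)$\,\Rightarrow\,$(i): since (x1),(x2) preserve length, $l(y)=l(\tht(y))=|\pi(y)|$ by the first fact, so $y$ is geodesic.

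Finally I would incorporate (iii) using the observation that reading off $\pi_i(y)$ splits the $X_i$-letters of $y$ into maximal runs separated by $\$$, with two $X_i$-letters lying in a common run exactly when every letter between them lies in $C_i$ and hence commutes with $X_i$. For (i)$\,\Rightarrow\,$(iii), argued contrapositively: if some run is not geodesic in $G_i$, shuffles (x2) collect its letters into a contiguous subword $u\in X_i^*$ that is not geodesic in $G_i$, keeping $l(y)$ fixed while exhibiting a strictly shorter representative, so $y$ is not geodesic. For (iii)$\,\Rightarrow\,$(ii), I first check that (x1) and (x2) preserve (iii) (a shuffle only permutes equal symbols in each $\pi_k$-image, and a local exchange replaces a factor of a run by an $X_i$-equal factor of the same length, preserving geodesicity of that run); then, were $y$ not trimmed, I take the first (x0) in a witnessing sequence, which is preceded only by (x1),(x2), so (iii) still holds just before it, and I note that the reduced factor being non-geodesic makes the whole run containing it non-geodesic, contradicting (iii). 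Combining, (i)$\,\Leftrightarrow\,$(ii)$\,\Leftrightarrow\,$(iv) and (i)$\,\Leftrightarrow\,$(iii), so all four conditions are equivalent. I expect the lifting in the second fact, and the verification that (x1),(x2) preserve (iii), to be the only steps needing genuine care.
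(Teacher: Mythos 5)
Your proposal is correct, and while it rests on the same core machinery as the paper's proof---the complete rewriting system $(T,R)$, the homomorphism $h$, and the lifting of $R$-rewritings to operations on $X^*$ (your Fact 2 is exactly Corollary~\ref{cor:geoword}(2), proved by the same commutative-diagram lifting, and your attention to the centralizing-set checks is precisely the right technical point)---it organizes the implications in a genuinely different way. The paper proves the single cycle (i)$\Rightarrow$(ii)$\Rightarrow$(iii)$\Rightarrow$(iv)$\Rightarrow$(i), whereas you prove (i)$\Rightarrow$(ii)$\Rightarrow$(iv)$\Rightarrow$(i) together with (i)$\Rightarrow$(iii)$\Rightarrow$(ii), front-loading as ``facts'' what the paper only extracts afterwards as Corollary~\ref{cor:geoword}. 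This buys two real simplifications. First, the paper's hardest step is (iii)$\Rightarrow$(iv): it must project the rewriting sequence through the maps $\pi_i$, introduce the auxiliary (s0$i$)/(s1$i$) operations, and argue that (s1$i$) moves preserve $\geol_i(\$\geol_i)^*$; in your scheme (ii)$\Rightarrow$(iv) is immediate, since trimmedness forbids any (x0) in the particular sequence $y \sr{x0\mbox{-}2*}{\ra} \tht(y)$ supplied by Fact 2, and the $\pi_i$-projections are needed only in the much lighter verification that (x1) and (x2) preserve condition (iii). Second, your proof of Fact 1 (geodesicity of the $\tht$-normal forms) via weight-monotonicity of the $R$-rules plus confluence---every $T$-representative of an element rewrites to the same irreducible word, so $irr(g)$ has minimal weight, and weight equals $X$-length under $h$---is more self-contained than the paper's argument, which establishes the same fact inside (iv)$\Rightarrow$(i) by comparing $y$ with the shortlex-least representative and invoking the normal-form property of $\tht$. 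What the paper's organization buys in exchange is that the (s0$i$)/(s1$i$) projection formalism developed for (iii)$\Rightarrow$(iv) is reused essentially verbatim in the proof of Proposition~\ref{prop:geocon} for conjugacy geodesics, so in context that machinery is not wasted effort; with your streamlined proof, those diagrams would still need to be developed before the next proposition.
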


\begin{proof}
(i) $\Rightarrow$ (ii): If $y$ is not \tri, then $y$
can be rewritten using length-preserving operations
$y \sr{x1-2*}{\ra} z$ to a word $z$ that can be further rewritten
using a length-reducing operation of type (x0).  Hence
$y$ cannot be a geodesic.

\smallskip

\noindent (ii) $\Rightarrow$ (iii):  Suppose that
$\pi_i(y) \in (X_i \cup \{\$\})^* \setminus (\geol_i(\$ \geol_i)^*)$
for some $i$.
Then $\pi_i(y)=y_0\$ y_1 \cdots \$y_k$ for some $k \ge 0$ 
and each $y_j \in X_i^*$, where for some $j$ we have $y_j \notin \geol_i$.
Then $y \sr{x2*}{\ra} y'y_jy''$, and a local reduction (operation of type (x0))
may be applied to the latter.  Hence $y$ is not \tri.

\smallskip

\noindent (iii) $\Rightarrow$ (iv): 
Suppose that $y \in  \cap_{i=1}^n \pi_i^{-1}(\geol_i(\$ \geol_i)^*)$.

Each rule (R$j$) of the rewriting system $R$ gives rise to
a commutative diagram via the map $h$ with a sequence of
operations on words over $X$.
In particular, for any words $w,x \in T^*$ with $w \sr{R0-2}{\ra}x$, we have
\[ 
\begin{array}{ccccccccccc}
w & \sr{R0}{\lra} & x & \hspace{.1in} & w & \sr{R1}{\lra} & x 
& \hspace{.1in} & w & \sr{R2}{\lra} & x \\
h \downarrow & & \downarrow h & & h \downarrow & & \downarrow h 
& & h \downarrow & & \downarrow h \\
h(w) & \sr{x2*}{\ra} \sr{x0}{\ra} \sr{x2*}{\ra} & h(x) & &
h(w) & \sr{x2*}{\ra} \sr{x1}{\ra} \sr{x2*}{\ra} & h(x)
& & h(w) & \sr{x2*}{\lra} & h(x)
\end{array} 
\]

For $1 \le i \le n$ and $0 \le j \le 1$
we can refine the operation (x$j$) by
defining the rewriting
operation (x$j$$i$)
to denote an operation of type (x$j$) in which a subword 
over $X_i$ is rewritten, and similarly we 
let rewriting rule
(R$j$$i$) denote a rule of type (R$j$) in which
effectively an $X_i^*$ subword is rewritten.

Using these refined operations and the maps $\pi_i$, 
we can extend the above diagrams 
to words over $X_i \cup   \{\$\}$ as follows.  We say that
a rewriting operation $yuz \ra yvz$, for $y,z \in (X_i \cup \{\$\})^*$
and $u,v \in X_i$ with $u \ei v$, is of 
{\it type} {\bf (s0$i$)} if $l(u)>l(v)$ and of 
{\it type} {\bf (s1$i$)} if $l(u)=l(v)$.  
Then for any words $w,x \in X^*$ with $w \sr{x0-2}{\ra} x$, 
and any $j \in \{0,1\}$ 
and $1 \le i,k \le n$ with $i \neq k$, we have
\[ 
\begin{array}{ccccccccccc}
w & \sr{xji}{\lra} & x & 
\hspace{.1in} & w & \sr{x1k}{\lra} & x 
& \hspace{.1in} & w & \sr{x2}{\lra} & x \\
\pi_i \downarrow & & \downarrow \pi_i & & \pi_i \downarrow & & \downarrow \pi_i 
& & \pi_i \downarrow & & \downarrow \pi_i \\
\pi_i(w) & \sr{sji}{\lra} & \pi_i(x) & &
\pi_i(w) & \sr{id}{\lra} & \pi_i(x)
& & \pi_i(w) & \sr{id}{\lra} & \pi_i(x)
\end{array} 
\]
where $id$ denotes the identity map on $(X_i \cup \{\$\})^*$.  Hence for
any words $w,x \in T^*$ with $w \sr{R0-2}{\ra} x$ and any $j \in \{0,1\}$ 
and $1 \le i,k \le n$ with $i \neq k$, we have
\[ 
\begin{array}{ccccccccccc}
w & \sr{Rji}{\lra} & x & \hspace{.1in} & w & \sr{R1k}{\lra} & x 
& \hspace{.1in} & w & \sr{R2}{\lra} & x \\
\pi_i \circ h \downarrow & & \downarrow \pi_i \circ h 
& & \pi_i \circ h \downarrow & & \downarrow \pi_i \circ h 
& & \pi_i \circ h \downarrow & & \downarrow \pi_i \circ h \\
\pi_i(h(w)) & \sr{sji}{\lra} & \pi_i(h(x)) & &
\pi_i(h(w)) & \sr{id}{\lra} & \pi_i(h(x))
& & \pi_i(h(w)) & \sr{id}{\lra} & \pi_i(h(x))~.
\end{array} 
\]

For our word $y \in \cap_{i=1}^n \pi_i^{-1}(\geol_i(\$ \geol_i)^*)$, 
the inclusion map $\iota:X^* \ra T^*$ 
allows us to consider $y=\iota(y)$ as a word over the
alphabet $T$, where $h(\iota(y))=h(y)=y$.
Since $R$ is a terminating and confluent rewriting system, we
have $y \sr{R0-2*}{\ra} irr(y)$, and so by
the commutative diagrams above, $y \sr{x0-2*}{\ra} \tht(y)$.

Suppose that an operation of type (x0) appears 
in this sequence of rewritings.  Then
$y \sr{x1-2*}{\ra} y' \sr{x0i}{\ra} z \sr{x0-2*}{\ra} \tht(y)$
for some $y',z \in X^*$ and some $1 \le i \le n$.
Again applying the commutative diagrams above,
then 
$\pi_i(y) \sr{s1i*}{\ra} \pi_i(y') \sr{s0i}{\ra} \pi_i(z)$.
However, operations of type (s1$i$) map elements of
$\geol_i(\$ \geol_i)^*$ to $\geol_i(\$ \geol_i)^*$,
so  $\pi_i(y') = \in 
\geol_i(\$ \geol_i)^*$ and no operation of type
(s0$i$) can be applied to $\pi_i(y')$, giving a contradiction.

\smallskip

\noindent (iv) $\Rightarrow$ (i):  Suppose that
$y \sr{x1-2*}{\ra} \tht(y)$.  Then $l_X(y) = l_X(\tht(y))$.
If $z$ is the shortlex least representative over $X$ of the
same element of $G$ as $y$, then since the set
$\{\tht(w)\}$ is a set of normal forms, we have
$\tht(z)=\tht(y)$.  Now $z=\iota(z) \sr{R0-2*}{\ra} irr(z)$,
and so by the argument above we have $z=h(z) \sr{x0-2*}{\ra} h(irr(z))=\tht(z)$.
However, since $z$ is geodesic, no length-decreasing 
operations can apply, so we have
$l_X(z)=l_X(\tht(z))=l_X(y)$.  Therefore $y$ is also geodesic.
\end{proof}

In the next Corollary we collect for later use 
two other results that
follow from the 
proof of Proposition~\ref{prop:geoword}.

\begin{corollary}\label{cor:geoword}
Using the notation above:
\begin{enumerate}
\item The subset
$h(irr(R))=\{\tht(w) \mid w \in X^*\} \subseteq X^*$ 
is a set of geodesic normal forms for $G$ over $X$.
\item For any word $w \in X^*$ there is a sequence
of rewriting operations $w \sr{x0-2*}{\ra} \tht(w)$.
\end{enumerate}
\end{corollary}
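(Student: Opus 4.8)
The plan is to extract both statements directly from the machinery assembled in the proof of Proposition~\ref{prop:geoword}, since neither requires genuinely new input. The crucial observation is that the chain of commutative diagrams relating the monoid rewriting system $(T,R)$ to the word operations (x0)--(x2) via the homomorphism $h$ was established for \emph{arbitrary} rewrites $w \sr{R0-2}{\ra} x$, and hence applies to every word, not only to those satisfying condition (iii).

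For part (2) I would take an arbitrary $w \in X^*$ and view it as the word $\iota(w) \in T^*$, so that $h(\iota(w)) = w$. Since the rewriting system $R$ is terminating and confluent, $\iota(w) \sr{R0-2*}{\ra} irr(w)$. Reading each step $\sr{R0}{\ra}$, $\sr{R1}{\ra}$, $\sr{R2}{\ra}$ through the commutative diagrams of the Proposition's proof translates this into a sequence of operations of types (x0), (x1), (x2), yielding $w = h(\iota(w)) \sr{x0-2*}{\ra} h(irr(w)) = \tht(w)$. This is exactly the first half of the (iii) $\Rightarrow$ (iv) argument, but nowhere does it invoke the hypothesis (iii); so part (2) holds for every $w$.

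For part (1), recall that the set $h(irr(R)) = \{\tht(w) \mid w \in X^*\}$ has already been identified as a set of normal forms for $G$ over $X$, so the only thing left to prove is that each $\tht(w)$ is geodesic. First I would record that $\tht$ is idempotent: since $\tht(w)$ represents the same element of $G$ as $w$, and $irr(\cdot)$ is well defined on group elements, we have $irr(\tht(w)) = irr(w)$, whence $\tht(\tht(w)) = h(irr(\tht(w))) = h(irr(w)) = \tht(w)$. Now I would apply the implication (iv) $\Rightarrow$ (i) of Proposition~\ref{prop:geoword} to the word $y = \tht(w)$: condition (iv) reads $\tht(w) \sr{x1-2*}{\ra} \tht(\tht(w))$, and because $\tht(\tht(w)) = \tht(w)$ this is satisfied by the empty sequence of rewrites. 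Hence $\tht(w)$ is a geodesic, and $h(irr(R))$ is a set of geodesic normal forms.

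The hard part is bookkeeping rather than new mathematics: one must check that the commutative-diagram translation used in part (2) is genuinely hypothesis-free, and that the idempotency of $\tht$ uses only the uniqueness of $irr$ on group elements. Once these two points are in place, both parts of the corollary drop out of Proposition~\ref{prop:geoword} and the properties of the rewriting system $(T,R)$ with no further computation.
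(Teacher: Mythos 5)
Your proposal is correct. For part (2) it coincides with the paper's own argument: both view $w$ as $\iota(w)\in T^*$, use termination and confluence of $(T,R)$ to obtain $\iota(w) \sr{R0-2*}{\ra} irr(w)$, and push this through the commutative diagrams from the proof of Proposition~\ref{prop:geoword}, the key point (which you state explicitly and the paper leaves implicit) being that this translation never invokes hypothesis (iii). For part (1), however, you take a genuinely different route. The paper simply points back into the proof of (iv) $\Rightarrow$ (i): there, for the shortlex least representative $z$ of a group element, the rewriting $z \sr{x0-2*}{\ra} \tht(z)$ cannot involve a length-reducing operation (x0) because $z$ is geodesic, so $l_X(\tht(z))=l_X(z)$ and the normal form $\tht(z)$ is geodesic. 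You instead use the implication (iv) $\Rightarrow$ (i) as a black box: you first prove idempotency, $\tht(\tht(w))=\tht(w)$, from the fact that $\tht(w)=_G w$ and that $irr$ depends only on the group element represented, and then observe that $y=\tht(w)$ satisfies condition (iv) via the empty sequence of rewritings, whence (i) gives geodesicity. Both arguments are valid and of comparable length; yours has the advantage of not reopening the proposition's proof (only its statement plus the previously established fact that $h(irr(R))$ is a set of normal forms are needed), at the cost of the small extra idempotency lemma, which is itself correct as stated.
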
 

\begin{proof}
Statement (1) is shown in the proof of (iv) $\Rightarrow$ (i) above.
For (2), let $w$ be any element of $X^*$.
Using the inclusion map $\iota:X^* \ra T^*$, we have
$w=\iota(w) \sr{R0-2*}{\ra} irr(w)$, and 
so from the first set of
commutative diagrams in the proof of (iii) $\Rightarrow$ (iv) in 
Proposition~\ref{prop:geoword} above,
we have $w=h(\iota(w)) \sr{x0-2*}{\ra} \tht(w)$.
\end{proof}

In the following Proposition
we show that a result similar to Proposition~\ref{prop:geoword}
holds for {\geocon}s.

\begin{proposition}\label{prop:geocon}  
Let $G$ be a graph product of the groups $G_i$ for $1 \le i \le n$ and
let $X=\cup_{i=1}^n X_i$ where $X_i$ is a finite inverse-closed
generating set for $G_i$ for each $i$.  
Also for each $i$ let $\geol_i:=\geol(G_i,X_i)$, 
$\geocl_i:=\geocl(G_i,X_i)$, and 
\[ \widetilde U_i :=  \{u_0 \$ u_1 \cdots \$ u_m \mid 
  m \ge 1 \mbox{ and } u_1,...,u_{m-1},u_mu_0 \in \geol_i\},
\]
and let $y \in X^*$.  The following are equivalent.
\begin{description}
\item[(i)] $y$ is a \geocon\  for $G$ with respect to $X$.
\item[(ii)] $y$ is a \ct\ word over $X$.
\item[(iii)] $y \in \cap_{i=1}^n \pi_i^{-1}(\geocl_i \cup \widetilde U_i)$.
\end{description}
\end{proposition}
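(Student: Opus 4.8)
The plan is to prove the three equivalences by closely mirroring the structure of the proof of Proposition~\ref{prop:geoword}, adapting each implication to account for conjugacy. The key new feature is that the conjugacy rewriting operations (x3) (cyclic conjugation) and (x4),(x5) (conjugacy exchange/reduction) are now in play, and the definition of $\widetilde U_i$ captures exactly the ``wrap-around'' phenomenon: a word of the form $u_0\$u_1\cdots\$u_m$ is allowed to be non-geodesic in its first and last blocks $u_0,u_m$ individually, provided the concatenation $u_mu_0$ is geodesic in $G_i$, reflecting that cyclic conjugation can move the tail block $u_m$ to the front to meet $u_0$.

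\smallskip

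First I would prove (i)~$\Rightarrow$~(ii): if $y$ is a \ct\ word, then no sequence of length-preserving rewritings $y\sr{x0-5*}{\ra}z$ can expose a length-reducing operation of type (x0) or (x5); since cyclic conjugation (x3) followed by these reductions realizes conjugacy-length, a word that admits no such reduction must already have $l(y)=|\pi(y)|_c$, so it is a \geocon. Conversely, if $y$ is \emph{not} \ct, then some length-preserving rewriting sequence reaches a word admitting an (x0) or (x5) reduction, producing a shorter word in the same conjugacy class, so $y$ cannot be a \geocon; this gives the contrapositive of (i)~$\Rightarrow$~(ii) simultaneously, establishing (i)~$\Leftrightarrow$~(ii).

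\smallskip

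For (ii)~$\Rightarrow$~(iii), I would argue the contrapositive. Suppose $\pi_i(y)\notin \geocl_i\cup\widetilde U_i$ for some $i$. Writing $\pi_i(y)=y_0\$y_1\cdots\$y_k$, failure to lie in $\widetilde U_i$ (when $k\ge 1$) means one of the ``interior'' blocks $y_1,\dots,y_{k-1}$ is non-geodesic, or the wrap-around product $y_ky_0$ is non-geodesic; failure to lie in $\geocl_i$ (when $k=0$) means the single block is not a \geocon\ of $G_i$. In the interior-block case a shuffle (x2) followed by a local reduction (x0) applies, exactly as in Proposition~\ref{prop:geoword}. In the wrap-around case I would use cyclic conjugation (x3) to bring the $X_i$-letters of the final block adjacent to those of the initial block, then apply a shuffle and a conjugacy reduction (x5) (using $X_i\subseteq C(\cdot)$ on the intervening commuting letters). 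Either way $y$ is not \ct. Conversely for (iii)~$\Rightarrow$~(i), I would lift $y$ to $T^*$ via $\iota$, rewrite to $irr(y)$, and push the commutative diagrams through the conjugacy operations as in the earlier proof, checking that the hypothesis $\pi_i(y)\in\geocl_i\cup\widetilde U_i$ forbids any (s0$i$)-type reduction from appearing; this shows $l(y)=l(\tht(y))=|\pi(y)|_c$.

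\smallskip

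The hard part will be the wrap-around analysis in both directions: I must verify that the conjugacy operations (x3)--(x5) interact correctly with the $\pi_i$ projections, since cyclic conjugation does not commute with $\pi_i$ in the naive way (it permutes blocks cyclically, so the image under $\pi_i$ is read as a cyclic word). The crucial bookkeeping is that the defining condition of $\widetilde U_i$ makes $u_mu_0$ geodesic precisely so that after cyclic rotation the fused block cannot be shortened, and I must confirm that no \emph{new} reduction opportunity is created elsewhere by the rotation. I expect to need an analog of the commuting-diagram argument for an operation of ``cyclic type'' on $(X_i\cup\{\$\})^*$, showing that (x3)-induced moves project to cyclic permutations of the $\$$-separated blocks, so that the condition in $\widetilde U_i$ is exactly the obstruction to any subsequent (s0$i$) reduction.
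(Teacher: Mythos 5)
Your two easy directions are essentially the paper's: the contrapositive argument that a word which is not \ct\ cannot be a \geocon\ matches the paper's (i) $\Rightarrow$ (ii), and your contrapositive for (ii) $\Rightarrow$ (iii) --- interior blocks giving a shuffle-then-(x0) reduction, the wrap-around block giving a cyclic-conjugation-then-shuffle-then-reduction --- is exactly what the paper does. The gap is in how you close the cycle. You assert the implication ``\ct\ implies \geocon'' (folded into your ``(i) $\Leftrightarrow$ (ii)'' step) on the grounds that ``cyclic conjugation (x3) followed by these reductions realizes conjugacy-length.'' That statement --- that every word which is not of minimal length in its conjugacy class can be carried, by the length-preserving operations (x1)--(x4), to a word admitting an (x0) or (x5) reduction --- is precisely the substantial content of the proposition; it is not an available known fact, and assuming it makes the argument circular. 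The paper never proves (ii) $\Rightarrow$ (i) directly: it proves the cycle (i) $\Rightarrow$ (ii) $\Rightarrow$ (iii) $\Rightarrow$ (i), and all of the real work sits in (iii) $\Rightarrow$ (i).

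Your sketch of (iii) $\Rightarrow$ (i) also cannot work as stated. You propose to lift $y$ to $T^*$, rewrite it to $irr(y)$, and check that no (s0$i$) reduction appears, concluding $l(y)=l(\tht(y))=|\pi(y)|_c$. But rewriting $y$ itself only compares $y$ against the normal form of the \emph{same} group element; that shows $y$ is a geodesic (this is Proposition~\ref{prop:geoword}), not that no \emph{conjugate} of $\pi(y)$ is shorter. Conjugacy-geodesity requires controlling $\tht(wyw^{-1})$ for every geodesic conjugator $w$, and that is what the paper's proof does: it assumes some $w$ gives $l_X(\tht(wyw^{-1}))<l_X(y)$, chooses the pair $(y,w)$ with $l_X(y)+2l_X(w)$ minimal, rewrites $wyw^{-1}$ (not $y$) to its normal form, and proves an inductive Claim tracking how each (x0$i$) step acts on the decompositions $\pi_j(wyw^{-1})=v_jw_jy_ju_jy_j'w_j^{-1}v_j^{-1}$. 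The two cases $\pi_j(y)\in\geocl_j$ and $\pi_j(y)\in\widetilde U_j$ yield the lower bounds $l(x_j)\ge l(y_j)$ and $l(x_j)+l(x_j')\ge l(y_j)+l(y_j')$ respectively, with the minimality of $l_X(y)+2l_X(w)$ invoked to rule out the degenerate cases where a block rewrites to the empty word; summing these bounds over $j$ contradicts $l_X(\tht(wyw^{-1}))<l_X(y)$. Note also that your ``hard part'' --- the interaction of (x3) with the projections $\pi_i$ --- only arises in the easy direction (ii) $\Rightarrow$ (iii); the genuinely hard interaction is between (x0$i$) reductions of $wyw^{-1}$ and the maps $\pi_j$ for $j\neq i$ (the $\sr{t*}{\ra}$ bookkeeping), which your proposal does not set up. Without an argument of this conjugator-tracking kind, the proposal does not close the loop.
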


\begin{proof}
(i) $\Rightarrow$ (ii):  Suppose that $y \in X^*$ is not \ct.  Then
$y \sr{x1-4*}{\ra} z$ for some word $z$ that can be rewritten
using a length-reducing operation of type (x0) or (x5) to a word
representing an element of the same conjugacy class.  Since $l(y)=l(z)$, then
$y$ cannot be a \geocon.

\smallskip

\noindent (ii) $\Rightarrow$ (iii):  Suppose that
$y \notin \cap_{i=1}^n \pi_i^{-1}(\geocl_i \cup \widetilde U_i)$. 
Then there is an index $i$ such that 
$\pi_i(y) \in (X_i \cup \{\$\})^* \setminus (\geocl_i \cup \widetilde U_i)$.
If $\pi_i(y) \in X_i^*$, then all letters of $y$ lie in 
$X_i \cup C_i$,  and so
$y \sr{x2*}{\ra} \pi_i(y)y'$ for some $y' \in C_i^*$.
Now $\pi_i(y) \notin \geocl_i$ implies that a conjugacy
reduction (x5) can be applied to the word $\pi_i(y)y'$, and so
$y$ cannot be \ct\ in this case.  On the other hand,
if $\pi_i(y) \notin X_i^*$, then
$\pi_i(y)=u_0\$ \cdots \$u_m$ such that $m \ge 1$,
each $u_i \in X_i^*$, and at least one of
$u_1,...,u_{m-1},$ or $u_mu_0$ does not lie in $\geol_i$.
A similar argument shows that in this case
$y \sr{x2-3*}{\ra} z$ for a word $z$ to which a 
local reduction (x0) can be applied, implying again that
$y$ cannot be \ct.

\smallskip

\noindent (iii) $\Rightarrow$ (i):
Suppose that $y \in \cap_{i=1}^n \pi_i^{-1}(\geocl_i \cup \widetilde U_i)$,
but that $y$ is not a \geocon.  
(Note that since $\geocl_i \subseteq \geol_i$ and 
$\widetilde U_i \subseteq \geol_i(\$ \geol_i)^*$,
Proposition~\ref{prop:geoword} implies that the word $y$ is a geodesic
for $G$ over $X$.)
Then there is a geodesic word $w \in X^*$
such that the element $wyw^{-1}$ of $G$ is represented by
a word 
over $X$ that is shorter than $y$.
In particular, the result in Corollary~\ref{cor:geoword}(1) 
that the $\tht$ normal forms are geodesics
shows that
the word $\tht(wyw^{-1})$ must then be shorter than $y$.
Choose such a pair of words $y\in \cap_{i=1}^n \pi_i^{-1}(\geocl_i \cup \widetilde U_i)$
and $w \in \geol=\geol(G,X)$ with  $l_X(\tht(wyw^{-1}))<l_X(y)$
such that 
$l_X(y)+2l_X(w)$ is least possible 
among all pairs with these properties.

Using Corollary~\ref{cor:geoword}(2)
we have $wyw^{-1} \sr{x0-2*}{\ra} \tht(wyw^{-1})$
(where $w^{-1}$ is the symbolic inverse of $w$ over $X$).
Since $l_X(\tht(wyw^{-1})) < l_X(y)$,
at least one operation of type (x0) must apply
in this sequence of rewriting operations.

Since property (iii) of Proposition~\ref{prop:geocon} holds for $y$,
for each $1 \le i \le n$
we can write
$\pi_i(y)=y_{i}u_iy_{i}'$ with
either $y_{i} \in \geocl_i$ and $u_i=y_i'=\la$ (where
$\la$ denotes the empty word), or
$u_i \in \$(\geol_i \$)^*$
and $y_i'y_i \in \geol_i$.  
Since $w$ is a geodesic, then
for each $1 \le i \le n$, using Proposition~\ref{prop:geoword}
we can write 
$\pi_i(w) = v_iw_i$ 
with $v_i \in (\geol_i \$)^*$ and
$w_i \in \geol_i$.  
Then 
$\pi_i(wyw^{-1}) = v_i w_{i} y_{i}u_iy_i' w_{i}^{-1} v_i^{-1}$
(where the formal inverse of a word 
$b_{1}\$ \cdots b_{k}\$$ with each $b_{j} \in X_i^*$
is defined to be $\$b_{k}^{-1} \cdots \$b_{1}^{-1}$).

In our commutative diagrams in the proof above, we did not 
consider the interaction of rewriting operations of type (x0$i$) with the 
map $\pi_k$ when $k \neq i$, but we need to do so now.
For any word 
$s \in (X_j \cup \$)^*$, we
write $s \sr{t*}{\ra} s'$ if $s'$ can be obtained
from $s$ by finitely many (possibly zero) replacements $\$\$ \ra \$$; i.e.
by shortening but not eliminating subwords that are strings of 
$\$$ signs.  Suppose that $z \in X^*$ and 
$\pi_i(z)=abc$ with $a \in (X_i^*\$)^*$, $b \in X_i^*$,
and $c \in (\$X_i^*)^*$.  Suppose that the
operation $b \sr{x0i}{\ra} b'$ induces the operation
$z \sr{x0i}{\ra} z':=ab'c$, and that $\la \neq b' \in X_i^*$.  If
the vertex $v_j$ is adjacent to $v_i$ in the graph $\lm$, and the vertex $v_k$
is not adjacent to $v_i$, then
\[ 
\begin{array}{ccccccc}
z & \sr{x0i}{\lra} & z' 
& \hspace{.3in} & z & \sr{x0i}{\lra} & z' \\
\pi_j \downarrow & & \downarrow \pi_j & & \pi_k \downarrow & & \downarrow \pi_k \\ 
\pi_j(z) & \sr{id}{\lra} & \pi_j(z') & &
\pi_k(z) & \sr{t*}{\lra} & \pi_k(z')
\end{array} 
\]

\medskip

In the following claim, we use these
commutative diagrams to show that in the process
of rewriting $wyw^{-1} \sr{x0-2*}{\ra} \tht(wyw^{-1})$,
each time an (x0$i$) rewriting operation is applied,
then on the level of images under the $\pi_j$ maps,
when $j \neq i$ the effect is the application of a
$\sr{t*}{\ra}$ operation, and when $j=i$, effectively either
a rewrite of the $w_i y_i w_i^{-1}$ subword of $wyw^{-1}$
is replaced by a word of length at least
$l(y_i)$ (in the case that $\pi_i(y) \in \geocl_i$)
or rewrites of the $w_i y_i$ and $y_i' w_i^{-1}$ 
subwords of $wyw^{-1}$
are replaced by words of length at least
$l(y_i)+l(y_i')$ (otherwise).

\noindent {\it Claim:} Suppose that
$wyw^{-1} \sr{x0-2*}{\ra} z \sr{x0i}{\ra} x
\sr{x0-2*}{\ra} \tht(wyw^{-1})$ and
for each $1 \le j \le n$ the word $z \in X^*$ satisfies either:
\begin{description}

\item[(a)] In the case that $\pi_j(y) =y_j \in \geocl_j$: \\ 
           $\pi_j(z)=\tilde v_j z_j \tilde v_j'$  for some\\           
           $\tilde v_j \in (\geol_j\$)^*$,
           $\tilde v_j' \in (\$ \geol_j)^*$, 
           and  $z_j \in X_j^*$ such that \\
           $v_j \sr{t*}{\ra} \tilde v_j$, $v_j^{-1} \sr{t*}{\ra} \tilde v_j'$, 
           $z_j =_{G_j} w_jy_{j}w_j^{-1}$ and    \\
           $l(y_j) \le l(z_j) \le l(y_j)+2l(w_j)$, \\
\hspace{.1in} or
\item[(b)] In the case that $\pi_j(y) \in \widetilde U_j$: \\ 
           $\pi_j(z)=\tilde v_jz_j\tilde u_jz_j'\tilde v_j'$ for some \\
           $\tilde u_j \in \$(\geol_j\$)^*$,
           $\tilde v_j \in (\geol_j\$)^*$,
           $\tilde v_j' \in (\$ \geol_j)^*$, 
           and $z_j,z_j' \in X_j^*$ such that \\
           $u_j \sr{t*}{\ra} \tilde u_j$,  
           $v_j \sr{t*}{\ra} \tilde v_j$, 
           $v_j^{-1} \sr{t*}{\ra} \tilde v_j'$, 
           $z_j =_{G_j} w_jy_j$ ,
           $z_j' =_{G_j} y_j'w_j^{-1}$, and
           $l(y_j)+l(y_j') \le l(z_j)+l(z_j') \le l(y_j)+l(y_j')+2l(w_j).$ 
\end{description}
Then for each $1 \le j \le n$
the word $x \in X^*$ also satisfies either:
\begin{description}

\item[(a')] In the case that $\pi_j(y) =y_j \in \geocl_j$: \\ 
           $\pi_j(x)=\hat v_j x_j \hat v_j'$  for some\\           
           $\hat v_j \in (\geol_j\$)^*$,
           $\hat v_j' \in (\$ \geol_j)^*$, 
           and  $x_j \in X_j^*$ such that \\
           $v_j \sr{t*}{\ra} \hat v_j$, $v_j^{-1} \sr{t*}{\ra} \hat v_j'$, 
           $x_j =_{G_j} w_jy_{j}w_j^{-1}$ and    \\
           $l(y_j) \le l(x_j) \le l(y_j)+2l(w_j)$,
\hspace{.1in} or
\item[(b')] In the case that $\pi_j(y) \in \widetilde U_j$:  \\
           $\pi_j(x)=\hat v_jx_j\hat u_jx_j'\hat v_j'$ for some \\
           $\hat u_j \in \$(\geol_j\$)^*$,
           $\hat v_j \in (\geol_j\$)^*$,
           $\hat v_j' \in (\$ \geol_j)^*$, 
           and $x_j,x_j' \in X_j^*$ such that \\
           $u_j \sr{t*}{\ra} \hat u_j$,  
           $v_j \sr{t*}{\ra} \hat v_j$, 
           $v_j^{-1} \sr{t*}{\ra} \hat v_j'$, 
           $x_j =_{G_j} w_jy_j$ ,
           $x_j' =_{G_j} y_j'w_j^{-1}$, and
           $l(y_j)+l(y_j') \le l(x_j)+l(x_j') \le l(y_j)+l(y_j')+2l(w_j)$.
\end{description}

\smallskip

{\narrower
\noindent {\it Proof of claim.} 
Since no length
reducing operation $z \sr{x0i}{\ra} x$ can be performed on a subword 
$a$ of
$z$ satisfying $\pi_i(a) \in \geol_i (\$ \geol_i)^*$,
the associated operation $\pi_i(z) \sr{s0i}{\ra} \pi_i(x)$
must apply to a subword of $\pi_i(z)$ disjoint from 
the $\tilde v_i$, $\tilde v_i^{-1}$, and $\tilde u_i$ subwords.

\noindent{\it Case 1.}  Suppose that $\pi_i(y) =y_i \in \geocl_i$.  Then 
the associated (s0$i$) operation must have the form
$\tilde v_i z_i \tilde v_i' \ra \hat v_i x_i \hat v_i'$ 
for some $x_i \in X_i^*$
where 
$\hat v_i = \tilde v_i$, $\hat v_i' = \tilde v_i'$,
$x_i \ei z_i \ei w_iy_{i}w_i^{-1}$, and
$l(x_i)<l(z_i) \le l(y_i)+2l(w_i)$.
Now since $x_i \sim_i y_i$ and the word
$y_i=\pi_i(y) \in \geocl_i$ is a \geocon\ for 
the group $G_i$ over the generating set $X_i$ in this case,  we must
have $l(y_i) \le l(x_i)$.  

If $x_i$ were the empty word $\la$, then since $x_i \sim_i y_i$
and $y_i \in \geocl_i$, we have $y_i = \la$ as well.
Then $w_iw_i^{-1} \sr{s0-1i*}{\ra} z_i \sr{s0i}{\ra} x_i$,
so we have $l(w_i)>0$.
Now $w \sr{x2*}{\ra} \tilde ww_i$ for a word $\tilde w \in X^*$, 
 $wyw^{-1}=_G \tilde wy\tilde w^{-1}$, and $l_X(w)>l_X(\tilde w)$.
But then replacing $w$ with $\tilde w$ contradicts our choice of words
$y \in \cap_{i=1}^n \pi_i^{-1}(\geocl_i \cup \widetilde U_i)$
and $w \in \geol$ with $l_X(\tht(wyw^{-1}))<l_X(y)$ 
and $l_X(y)+2l_X(w)$ minimal with respect to this property.
Hence $l(x_i) \ge 1$.  

Then the commutative diagrams above the Claim show that
for all $j \neq i$, $\pi_j(z) \sr{t*}{\ra} \pi_j(x)$.
Hence $v_j \sr{t*}{\ra} \tilde v_j \sr{t*}{\ra} \hat v_j$
and since $v_j, \tilde v_j \in (\geol_i \$)^*$, then
$\hat v_j \in (\geol_i \$)^*$.  
The proofs that $u_j \sr{t*}{\ra} \hat u_j \in \$ (\geol_i \$)^*$
and $v_j' \sr{t*}{\ra} \hat v_j' \in (\$ \geol_i)^*$
are identical.  Since the subwords $z_j$ and $z_j'$
of $\pi_j(z)$ lie in $X_j^*$, the operation $\pi_j(z) \sr{t*}{\ra} \pi_j(x)$
can't affect these subwords, and so $z_j=x_j$ and $z_j'=x_j'$
for all indices $j \neq i$.
Therefore for the word $x$,
conditions (a') or (b') hold
for all indices $j$, completing Case 1.  

\smallskip

\noindent{\it Case 2.}  Suppose that $\pi_i(y) \in \widetilde U_i$.  
Similar to the argument in the previous case, 
the (s0$i$) operation associated to the rewriting operation
$z \sr{x0i}{\ra} x$ has the form
$\pi_i(z)=\tilde v_iz_i\tilde u_iz_i'\tilde v_i' \ra 
\hat v_ix_i\hat u_ix_i'\hat v_i'=\pi_i(x)$ 
where $\tilde v_i=\hat v_i$, $\tilde u_i=\hat u_i$, 
$\hat v_i' = \tilde v_i'$,
and either $x_i \ei z_i$ with $l(x_i)<l(z_i)$ and $x_i'=z_i'$, 
or else $x_i=z_i$ and $x_i' \ei z_i'$ with $l(x_i')<l(z_i')$.
We consider the first of these two forms of the rewriting operation;
the proof in Case 2 for the second form is similar.
Now $x_i \ei z_i \ei w_iy_i$ and $x_i' = z_i' \ei y_i'w_i^{-1}$
with  $l(x_i)+l(x_i') < l(z_i) + l(z_i') \le l(y_i)+l(y_i')+2l(w_i)$.
Moreover, $x_i'x_i \ei y_i'w_i^{-1}w_iy_i \ei y_i'y_i$.
The fact that 
$y_iu_iy_i'=\pi_i(y) \in \widetilde U_i$ in this case implies that
$y_i'y_i \in \geol_i$, i.e., $y_i'y_i$ is a geodesic word.  
Hence $l(y_i)+l(y_i') 
\le l(x_i)+l(x_i')$.

If $x_i$ were the empty word, then since $x_i \ei w_iy_i$ and $w_i \in \geol_i$,
the word $w_i^{-1}$ is another geodesic representative of $y_i$,
and so $y_i'w_i^{-1}$ is also geodesic.  As in the
previous case, $w \sr{x2*}{\ra}w'w_i$ and
$y \sr{x2*}{\ra} y_iy''y_i'$.  But then replacing $w$
with $w'$ and $y$ with $y''y_i'w_i^{-1}$ 
again contradicts our choice of words
$y \in \cap_{i=1}^n \pi_i^{-1}(\geocl_i \cup \widetilde U_i)$
and $w \in \geol$ with $l_X(\tht(wyw^{-1}))<l_X(y)$ 
and $l_X(y)+2l_X(w)$ minimal with respect to this property.
Hence $l(x_i) \ge 1$.  Similarly $l(x_i') \ge 1$.

Then for all $j \neq i$, $\pi_j(z) \sr{t*}{\ra} \pi_j(x)$.
Therefore as in the prior case, for the word $x$,
condition (a') or (b') holds
for all indices $j$, completing Case 2 and the 
proof of the Claim.

}

\smallskip

Since the word $wyw^{-1}$ satisfies the property that
one of (a) or (b) holds for all $1 \le j \le n$, iteratively
applying this claim
shows that for each $j$, one of (a') or (b') must hold for
the normal form word $x:=\tht(wyw^{-1})$.
Denoting  the number of occurrences of $X_i$ letters in
a word $u$ over $X_i \cup \$$ by $l_i(u)$, then
whenever $u \sr{t*}{\ra} \tilde u$ we have
$l_i(u)=l_i(\tilde u)$.
Therefore $l_X(\tht(wyw^{-1})) 
= \sum_{i=1}^n l_i(\pi_i(\tht(wyw^{-1})))
\ge \sum_{i=1}^n l(y_{i})+l_i(u_i) +l(y_i') = l_X(y)$.
But our initial assumption on the pair $y,w$ included
the inequality $l_X(\tht(wyw^{-1})) < l_X(y)$, resulting in the
required contradiction.
\end{proof}

We are now ready to prove Theorem~\ref{thm:graphprodgcl}.
\begin{proof}
The class of regular languages is closed under finitely 
many operations of 
union, intersection, concatenation, and Kleene star (i.e.~$()^*$),
and is closed under inverse images of monoid
homomorphisms (see, for example,~\cite[Chapter~3]{hu}).

Proposition~\ref{prop:geoword} shows that 
$\geol=\cap_{i=1}^n \pi_i^{-1}(\geol_i(\$\geol_i)^*)$.
Then applying these closure properties
 yields a new proof of 
the result of Loeffler, Meier, and Worthington \cite[Theorem 1]{lmw}
that whenever the sets $\geol_i$ of geodesics for the
vertex groups $G_i$ are regular languages, then the
language $\geol$ 
of geodesics for the graph product group
$G$ over $X$ is also regular.

From Proposition~\ref{prop:geocon} we have
that the language of {\geocon}s for the
graph product group $G$ over the generating set 
$X$ satisfies $\geocl = \cap_{i=1}^n \pi_i^{-1}(\geocl_i \cup \widetilde U_i)$.
By hypothesis the language $\geocl_i$ is regular for each $i$, and so
the closure properties above imply that 
it suffices to
show that the language $\widetilde U_i$ over $X_i \cup \{\$\}$
is regular, given that the language
$\geol_i$ over the alphabet $X_i$ is regular.
The set
$\geol_i$ is recognized by a finite
state automaton with a set $Q=\{q_0, \dots q_m\}$
of $m+1$ states, where
$q_0$ is the start state, and with transition function
$\delta:Q \times X_i \rightarrow Q$.  Then $\geol_i$ can be written as 
$\geol_i=L_0\overline{L_0}\cup \cdots \cup L_{m}\overline{L_{m}}$, where 
for each $0 \le j \le m$, the set $L_j$ 
is the language of all the words $w$ over $X_i$ such that $\delta(q_0,w)=q_j$ 
and $\overline{L_j}$ 
is the set of all words $z$ such that $\delta(q_j,z)$ is an accept state. Then 
$L_j$ and $\overline{L_j}$ are regular languages.
Now $\widetilde U_i=\cup_{j=0}^m \overline{L_j}\$(\geol_i\$)^*L_j$, and 
therefore
$\widetilde U_i$ is also a regular language.
\end{proof}

\begin{example}\label{ex:racg}
{\em A}
right-angled Coxeter group 
{\em
is a graph product of cyclic groups $G_i = \langle a_i \mid a_i^2=1\rangle$
of order 2.  Then the graph product group $G=G\lm$
has generating set $X=\{a_i\}$, and 
there is a bijection between $T$ and the set of cliques of the graph $\lm$.
Theorem~\ref{thm:graphprodgcl} shows that the \wgeol\ $\geol(G,X)$
and the \wgeocl\ $\geocl(G,X)$ are both regular, and so
the \wgeos\ $\geos(G,X)$ and \wgeocs\ $\geocs(G,X)$ are both
rational functions.

In this example we provide the details for both spherical
and geodesic languages and series for
a specific right-angled Coxeter group, namely a graph product $G=G\lm$
of three groups $G_i = \langle a_i \mid a_i^2=1\rangle$ ($1 \le i \le 3$)
such that in the graph $\lm$ the vertex pairs $v_1,v_2$
and $v_2,v_3$ are adjacent, but the vertices $v_2$ and $v_3$
are not adjacent.  That is, 
$G=G_1 \times (G_2 * G_3)=\Z_2 \times (\Z_2 * \Z_2)$.
Here $X=\{a_1,a_2,a_3\}$ and $T=\{\aaa, \bb, \cc, \ab, \ac\}$.
The rewriting system
\[ \begin{array}{rlll}
 R := \{& \aaa^2 \sr{R0}{\ra} 1, & \aaa\ab  \sr{R0}{\ra} \bb, & 
                \hspace{-.6in}\aaa\ac \sr{R0}{\ra} \cc, \\
        & \bb^2 \sr{R0}{\ra} 1, & \bb\ab  \sr{R0}{\ra} \aaa, &  \\
        & \cc^2 \sr{R0}{\ra} 1, & \cc\ac  \sr{R0}{\ra} \aaa, &  \\
        & \ab\aaa \sr{R0}{\ra} \bb, & \ab\bb \sr{R0}{\ra} \aaa, & \\
        & \ab^2  \sr{R0}{\ra} \bb\bb, & \ab\ac \sr{R0}{\ra} \bb\cc, & \\
        & \ac\aaa \sr{R0}{\ra} \cc, & \ac\cc \sr{R0}{\ra} \aaa, & \\
        & \ac^2  \sr{R0}{\ra} \cc\cc, & \ac\ab \sr{R0}{\ra} \cc\bb, & \\
        & \aaa\bb \sr{R2}{\ra} \ab, & \aaa\cc \sr{R2}{\ra} \ac,  & \\
        & \bb\aaa \sr{R2}{\ra} \ab, & \bb\ac \sr{R2}{\ra} \ab\cc,  & \\
        & \cc\aaa \sr{R2}{\ra} \ac, & \cc\ab \sr{R2}{\ra} \ac\bb \hspace{.2in} \}  & 
\end{array} \]
is a subset of the set of all rewriting rules of types (R0)-(R2),
but is already sufficient to be a complete rewriting system for $G$
(see Proposition~\ref{prop:app}).  Then the set 
$irr(R) = T^* \setminus T^* L T^*$, where $L$ is the finite
set of words on the left hand sides of the rules in $R$,
is a regular language of normal forms for $G$ over $T$.  
Since the class of regular languages
is closed under images of monoid homomorphisms (\cite[Chapter~3]{hu}),
then the language $h(irr(R))$ is also regular.  Since the language
$H := h(irr(R))$ is a set of geodesic normal forms for $G$ over $X$,
then 
the strict growth series satisfies $f_H = \sphs(G,X)$;
hence the \wsphs\ $\sphs(G,X)$ is rational.
The combination of Theorem~\ref{thm:freeprodfinite} and
Proposition~\ref{prop:dirprodsph} shows that the
\wsphcl\ $\sphcl(G,X)$ is regular and the \wsphcs\ 
$\sphcs(G,X)$ is rational as well.

The homomorphisms $\pi_i: X^* \ra (X_i \cup \{\$\})^*$
are defined by 
$\pi_1(a_1)~=~a_1$,  $\pi_1(a_2) = \la$,  $\pi_1(a_3) = \la$, 
$\pi_2(a_1) = \la$,   $\pi_2(a_2) = a_2$, $ \pi_2(a_3) = \$ $,  
$\pi_3(a_1) = \la$,  $\pi_3(a_2) = \$ $, and $ \pi_3(a_3) = a_3~.$
Proposition~\ref{prop:geoword}
implies that the \wgeol\ 
$\geol:=\geol(G,X) = \cap_{i=1}^3 \pi_i^{-1} (\{\la,a_i\}(\$ \{\la,a_i\})^*)$.
Since the symbol $\$$ does not appear in the image of
the map $\pi_1$, then a geodesic word contains at most one occurrence
of the letter $a_1$.  Moreover, the preimage sets under
$\pi_2$ and $\pi_3$ imply that any two occurrences of $a_2$
must have an $a_3$ between them, and vice-versa.
Then
\[
\geol=\{\la,a_3\}(a_2a_3)^*\{\la,a_1\}(a_2a_3)^*\{\la,a_2\} \cup
        \{\la,a_2\}(a_3a_2)^*\{\la,a_1\}(a_3a_2)^*\{\la,a_3\}~.
\]
The strict growth function for this language satisfies
$\phi_\geol(0)=1$, $\phi_\geol(1)=3$, and $\phi_\geol(n)=2n+2$
for all $n \ge 2$.  The \wgeos\ for this group satisfies
$\geos(G,X)(z) = (1+z+z^2-z^3) / (1-z)^2$.

Applying Proposition~\ref{prop:geocon}, the \wgeocl\ is
$\geocl := \geocl(G,X) = 
\cap_{i=1}^3 \pi_i^{-1}(\{\la,a_i\} \cup \{\la,a_i\}(\$\{\la,a_i\})^*\$
\cup (\$\{\la,a_i\})^*\$a_i)$.  Analyzing this in the same way, then
\[\begin{array}{rl}
\geocl =& \{a_2,a_3,a_1a_2,a_1a_3,a_2a_1,a_3a_1\} \cup 
(a_2a_3)^*\{\la,a_1,a_2a_1a_3\}(a_2a_3)^*    \\
&  \cup (a_3a_2)^*\{\la,a_1,a_3a_1a_2\}(a_3a_2)^*~.
\end{array}\]
The strict growth function for this language satisfies
$\phi_\geocl(0)=1$, $\phi_\geocl(1)=3$, 
$\phi_\geocl(2)=6$, $\phi_\geocl(3)=6$,
$\phi_\geocl(2k)=2$,
and $\phi_\geocl(2k+1)=4k+2$ for all $k \ge 2$.  Then the
\wgeocs\ is given by
$\geocs(G,X)(z) = (1+3z+4z^2-9z^4+z^5+4z^6) / (1-z^2)^2$.
}
\end{example}


\begin{example}\label{ex:psl2}
{\em
Let $G$ be the projective special linear group 
$G:=PSL_2(\Z)=\Z_2 * \Z_3=\langle a,b,c \mid a^2=1,b^2=c, bc=1\rangle$ 
with the generating set $X=\{a,b,c\}$.  Theorem~\ref{thm:freeprodfinite}
shows that the  
\wsphcl\ $\sphcl(G,X)$ is not regular and the corresponding
\wsphcs\ is not rational.  However, from Theorem~\ref{thm:graphprodgcl}
we have that the \wgeocl\ $\geocl(G,X)$ is regular and 
the \wgeocs\ $\geocs(G,X)$ is a rational function.
Indeed, it follows from Theorem~\ref{thm:graphprodgcl}
that for any graph product of finite groups, the
\wgeocl\ is regular and the \wgeocs\ is rational, with respect to 
a union of finite generating sets of the vertex groups.
} 
\end{example}

\begin{theorem}\label{thm:dirfreegcs}
Let $G$ and $H$ be groups with finite inverse-closed generating
sets $A$ and $B$, respectively.  
Let $\geocs_G(z):=\geocs(G,A)(z)=\sum_{i=0}^\infty r_iz^i$ and 
$\geocs_H(z):=\geocs(H,B)(z)=\sum_{i=0}^\infty s_iz^i$ be the \wgeocs, 
and let $\geos_G:=\geos(G,A)$ and $\geos_H:=\geos(H,B)$
be the \wgeos\ for these pairs.

(i) The \wgeocs\ $\geocs_\times$ of the direct product 
$G \times H = \langle G,H \mid [G,H] \rangle$ of groups 
$G$ and $H$ with respect to the generating set $A \cup B$ is given by 
$\geocs_\times = \sum_{i=0}^\infty \delta_iz^i$
where $\delta_i:=\sum_{j=0}^i \binom{i}{j} r_j s_{i-j}$.

(ii) The \wgeocs\ $\geocs_*$ of the free product 
$G*H$ of the groups 
$G$ and $H$ with respect to the generating set $A \cup B$ is given by 
$$\geocs_* - 1 = (\geocs_G -1) + (\geocs_H - 1) -
  z \frac{d}{dz} \ln\left[ 1 - (\geos_G-1)(\geos_H-1) \right].$$
\end{theorem}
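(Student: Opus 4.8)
The plan is to handle the two parts separately, in each case combining the normal-form and conjugacy theory of the relevant product with a direct enumeration of conjugacy geodesic \emph{words} (not of normal forms) organized by length.

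For part (i), I would first record that in $G\times H$ with generating set $A\cup B$ the two letter-types commute and act independently, so each element is a pair $(g,h)$ with word length $|g|_A+|h|_B$, conjugacy is componentwise, and $|(g,h)|_c=|g|_c+|h|_c$. Writing $w_A,w_B$ for the subwords of $w\in(A\cup B)^*$ in the two alphabets, one has $l(w)=l(w_A)+l(w_B)$ while $\pi(w)$ projects to $\pi_A(w)\in G$ and $\pi_B(w)\in H$; since always $l(w_A)\ge|\pi_A(w)|_c$ and $l(w_B)\ge|\pi_B(w)|_c$, the word $w$ is a \geocon\ of $G\times H$ exactly when $w_A\in\geocl(G,A)$ and $w_B\in\geocl(H,B)$. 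Thus the conjugacy geodesics of length $i$ are precisely the interleavings of a length-$j$ word of $\geocl(G,A)$ with a length-$(i-j)$ word of $\geocl(H,B)$; choosing the positions of the $A$-letters contributes the factor $\binom{i}{j}$, and summing over $j$ gives $\delta_i=\sum_{j=0}^i\binom{i}{j}r_js_{i-j}$, which is (i).

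For part (ii) I would first split $\geocl(G*H,A\cup B)$ using the classical conjugacy structure of free products: an element conjugate into a factor has all of its conjugacy geodesics inside that factor (a mixed cyclically reduced word has at least two alternating syllables, hence is not conjugate into a factor), so the words of $\geocl(G*H)$ lying in $A^*$ are exactly $\geocl(G,A)$ and those in $B^*$ are exactly $\geocl(H,B)$, meeting only in the empty word. Writing $T(z)$ for the strict growth series of the remaining \emph{mixed} conjugacy geodesics, this yields $\geocs_*-1=(\geocs_G-1)+(\geocs_H-1)+T$, and the problem reduces to showing $T=-z\frac{d}{dz}\ln[1-PQ]$, where $P:=\geos_G-1$ and $Q:=\geos_H-1$ are the strict growth series of the \emph{nonempty} geodesic words over $A$ and over $B$.

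To compute $T$ I would characterize the mixed conjugacy geodesics cyclically: a word $w$ using both alphabets satisfies $l(w)=|\pi(w)|_c$ iff, read around a circle, its maximal cyclic runs alternate between $A$-runs and $B$-runs (so there are $2m\ge2$ of them) and each run is a geodesic word of its factor, since shortening any run is a conjugacy-preserving length reduction. (This is a genuinely \emph{cyclic} condition: e.g. $aba$ over $\langle a\rangle*\langle b\rangle$ is a conjugacy geodesic although it begins and ends in the same factor, because its cyclic runs are $a^2$ and $b$.) Let $\Omega_{n,m}$ be the set of such words of length $n$ with exactly $m$ $A$-runs, and let the ``aligned'' words be those of the form $g_1h_1\cdots g_mh_m$ with each $g_i$ a nonempty $A$-geodesic and each $h_i$ a nonempty $B$-geodesic; these are exactly the members of $\Omega_{n,m}$ beginning at the start of an $A$-run, and concatenation shows they have length generating function $(PQ)^m$, so their number of length $n$ is $b_{m,n}:=[z^n](PQ)^m$. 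Now $\Z/n\Z$ acts on $\Omega_{n,m}$ by rotation, preserving $m$; an orbit $O$ with stabilizer of order $s$ has $|O|=n/s$ and contains exactly $m/s$ aligned rotations, so $|O|=\frac{n}{m}\cdot(\text{aligned rotations in }O)$. Summing over orbits gives $|\Omega_{n,m}|=\frac{n}{m}b_{m,n}$, whence $T=\sum_{m\ge1}\frac1m\,z\frac{d}{dz}(PQ)^m=-z\frac{d}{dz}\ln[1-PQ]$, since the operator $z\frac{d}{dz}$ multiplies the coefficient of $z^n$ by $n$; combining with the decomposition above yields (ii).

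The main obstacle I expect is precisely this final counting step. One must get the cyclic characterization right — conjugacy geodesics are cyclically reduced at the level of \emph{runs} rather than of syllable boundaries, as the $aba$ example shows — and then correctly pass from counting conjugacy geodesic \emph{words} to the $\frac1m$-weighted aligned-block count via the rotation action, which is exactly where the $\ln$ and the operator $z\frac{d}{dz}$ arise. By contrast, the verifications that every aligned word lies in $\Omega_{n,m}$, that distinct syllable-tuples give distinct words, and that factor elements conjugate in $G*H$ are already conjugate in the factor are all routine.
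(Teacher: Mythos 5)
Your proof is correct, and while your part (i) is essentially the paper's own shuffle argument, your part (ii) reaches the formula by a genuinely different counting mechanism. The paper also splits off $\{\la\}$, $\geocl(G,A)\setminus\{\la\}$ and $\geocl(H,B)\setminus\{\la\}$, but it then organizes the mixed conjugacy geodesics by their \emph{first letter}, writing those beginning with an $A$-letter as $y_1z_1\cdots y_nz_ny_{n+1}$ with $y_2,\dots,y_n,\,y_{n+1}y_1\in\geol(G,A)\setminus\{\la\}$, $y_1\in A^+$, and $z_j\in\geol(H,B)\setminus\{\la\}$; the strict growth series of this set is $\sum_{n\ge1}(\geos_H-1)\left[(\geos_G-1)(\geos_H-1)\right]^{n-1}\bigl(z\tfrac{d}{dz}\geos_G\bigr)$, where the factor $z\tfrac{d}{dz}\geos_G$ arises because a geodesic of length $i$ splits into an ordered pair $(y_{n+1},y_1)$ with $y_1\neq\la$ in exactly $i$ ways; summing the geometric series and adding the symmetric $B$-anchored term gives a rational expression that one then recognizes as $-z\tfrac{d}{dz}\ln\left[1-(\geos_G-1)(\geos_H-1)\right]$. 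You instead grade the mixed conjugacy geodesics by the number $m$ of cyclic $A$-runs and use the rotation action of $\Z/n\Z$, matching each orbit against its ``aligned'' representatives to get $|\Omega_{n,m}|=\tfrac{n}{m}[z^n](PQ)^m$, so the logarithm emerges directly from $\sum_{m\ge1}\tfrac1m x^m=-\ln(1-x)$ rather than appearing only at the final repackaging step. Both arguments rest on the same input from free-product conjugacy theory (cyclically reduced elements of syllable length at least two are conjugate iff they differ by a cyclic permutation of syllables), which legitimizes the paper's wrap-around condition $y_{n+1}y_1\in\geol(G,A)$ exactly as it legitimizes your run-wise cyclic characterization; the paper leaves this as implicit as you do. What your route buys is a conceptual explanation of where the $\ln$ and the operator $z\tfrac{d}{dz}$ come from, namely a necklace-style orbit count; what the paper's anchored linear decomposition buys is that it avoids any discussion of periodic words and stabilizers, which your orbit count must handle (and correctly does, via the $m/s$ aligned rotations in an orbit with stabilizer of order $s$).
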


\begin{proof}
Denote the {\wgeol}s by 
$\geocl_G:=\geocl(G,A)$,
$\geocl_H:=\geocl(H,B)$,
$\geol_G:=\geol(G,A)$, and
$\geol_H:=\geol(H,B)$.

\smallskip

(i) The proof in this case follows the same argument as
the proof of the formula for the \wgeos\ of $G \times H$
in terms of the \wgeos\ of $G$ and $H$ in~\cite[Proposition~1]{lmw}.
In particular, in $G \times H$ each \geocon\ word $w$ of length $i$ can be
obtained by taking a word $y$ in $\geocl_G$ of length $0 \le j \le i$
and a word $z$ in $\geocl_H$ of length $i-j$, and ``shuffling'' the letters
so that the letters of $y$ and $z$ appear in the same order,
but not necessarily contiguously, in $w$.  

\smallskip

(ii) The \wgeocl\ $\geocl_*:=\geocl(G*H,A \cup B)$ can be written as
a disjoint union
$$\geocl_* = \{\la\} \cup (\geocl_G \setminus \{\la\}) \cup 
   (\geocl_H \setminus \{\la\}) \cup 
    \geocl_{A\bullet} \cup \geocl_{B\bullet}$$
where  $\la$ is the empty word,
$\geocl_{A\bullet}$ is the set of all \geocon\ words beginning
with a letter in $A$ and containing at least one letter in $B$, 
and similarly 
$\geocl_{B\bullet}$ is the set of all \geocon\ words beginning
with a letter in $B$ and containing at least one letter in $A$.
As a consequence, 
$$\geocs_* = 1+ (\geocs_G -1) + (\geocs_H -1) +
    f_{\geocl_{A\bullet}} + f_{\geocl_{B\bullet}},$$ 
where as usual $f_L$ denotes the strict growth series of the language $L$.
Now 
$\geocl_{A\bullet}$ can also be decomposed as a disjoint union
\begin{eqnarray*}
\geocl_{A\bullet} &=& \cup_{n=1}^\infty \{y_1z_1 \cdots y_nz_ny_{n+1} \mid 
  y_2,...,y_{n},y_{n+1}y_1 \in \geol_G \setminus \{\la\}, y_1 \in A^+, \\ 
& & \hspace{1.8in} z_1,...,z_n \in \geol_H \setminus \{\la\}\}. 
\end{eqnarray*}
As a consequence, the growth series of this language is 
\[ f_{\geocl_{A\bullet}}=\sum_{n=1}^\infty 
  (\geos_H-1)[(\geos_G-1)(\geos_H-1)]^{n-1}\left(z\frac{d}{dz} \geos_G\right), \]
where the $i$-th coefficient in the series 
$z\frac{d}{dz} \geos_G = \sum_{i=0}^\infty \alpha_i z^i$,
given by
\[ \alpha_i = i \cdot (\# \mbox{ of geodesics in } (G,A) \mbox{ of length } i),\]
counts the number of pairs of words $y_1,y_n$
with $y_1 \in A^+$, $y_n \in A^*$, and $y_ny_1$ a geodesic 
for the pair $(G,A)$ of length $i$.  The formula for
$f_{\geocl_{B\bullet}}$ is obtained in the same way.
Putting these together, then
\begin{eqnarray*}
\geocs_* &=& 1+ (\geocs_G -1) + (\geocs_H -1) +
 \sum_{n=1}^\infty (\geos_H-1)[(\geos_G-1)(\geos_H-1)]^{n-1}\left(z\frac{d}{dz} \geos_G\right) + \\
         & & \hspace{1.75in}
 \sum_{n=1}^\infty (\geos_G-1)[(\geos_H-1)(\geos_G-1)]^{n-1}\left(z\frac{d}{dz} \geos_H\right)\\
         &=& \geocs_G + \geocs_H - 1 +
 z\left(\frac{d}{dz} \geos_G\right)(\geos_H-1)\frac{1}{1-(\geos_G-1)(\geos_H-1)} + \\
         & & \hspace{1.75in}
 z\left(\frac{d}{dz} \geos_H\right)(\geos_G-1)\frac{1}{1-(\geos_G-1)(\geos_H-1)} \\
         &=& \geocs_G + \geocs_H - 1 + 
 z\frac{\left(\frac{d}{dz} \geos_G\right)(\geos_H-1) + \left(\frac{d}{dz} \geos_H\right)(\geos_G-1)}{1-(\geos_G-1)(\geos_H-1)}, 
\end{eqnarray*} 
resulting in the required formula.
\end{proof}

\begin{example}\label{ex:f2geocs}
{\em 
In the free group $F_2=\mathbb{Z} * \mathbb{Z}$, associate $G=\langle a \mid~ \rangle$ with 
the first copy of the integers and $H=\langle b \mid ~\rangle$ with the second,
and let $A=\{a,a^{-1}\}$, $B=\{b,b^{-1}\}$. 
We have 
\[
\geocs_G=\geocs_H=\geos_G=\geos_H=\frac{1+z}{1-z} ,
\]
and so
$1-(1-\geos_G)(1-\geos_H)=\frac{1-2z-3z^2}{(1-z)^2}$.
Plugging these into the formula in Theorem~\ref{thm:dirfreegcs}(ii),
we obtain 
\[\geocs(F_2,\{a^{\pm 1},b^{\pm 1}\})=\geocs_*=
\frac{1 + z - z^2 - 9z^3}{1 - 3z - z^2 + 3z^3}.\]
(We note that in \cite[Corollary 14.1]{rivin} Rivin has
computed the \weqcs\ for this group (see Section~\ref{sec:open} 
for the definition of the \weqcl\ and \weqcs), and that in
this example, the \weqcl\ and the \wgeocl\ are the same set,
and so the corresponding growth series are also equal.
The rational function above
differs from Rivin's formula
by adding 1, because Rivin's growth series
does not count the constant term corresponding to the empty word
$\la$ in the \weqcl.)
}
\end{example}

\begin{example}\label{ex:psl2again}
{\em
Let $G$ and $H$ be finite groups with generating sets $A := G \setminus \{1_G\}$
and $B := H \setminus \{1_H\}$, and let $m := |A| = |G| - 1$ and $n := |B| = |H| -1$.
The corresponding languages
are $\geocl_G=\geol(G,Y)=A \cup \{\la\}$ and 
$\geocl_H=\geol(H,B)=Z \cup \{\la\}$, where $\la$ is the empty word, and
hence we have growth series
$\geocs_G=\geos(G,A)=mz+1$ and 
$\geocs_H=\geos(H,B)=nz+1$.
For the free product $G*H$, with generating set $A \cup B$,
the formula in Theorem~\ref{thm:dirfreegcs}
shows that the \wgeocs\ is
\[ 
\geocs_*(z)=\geocs(G*H,A \cup B)(z)=\frac{1+(m+n)z+mnz^2-mn(m+n)z^3}{1-mnz^2} .
\]
In particular, for the group $P:=PSL_2(\Z)$ with the generating
set $X$ from Example~\ref{ex:psl2}, the \wgeocs\ is given
by the rational function
\[
\geocs(P,X)(z)=\frac{1+3z+2z^2-6z^3}{1-2z^2} .
\]
}
\end{example}

\begin{proposition}\label{prop:amalgprodfinite}
If $G$ and $H$ are finite groups with a common subgroup $K$,
then the free product $G *_K H$ of $G$ and $H$ amalgamated over 
$K$, with respect 
to the generating set $X := G \cup H \cup K - \{1\}$,
has regular \wgeocl\ $\geocl(G *_K H,X)$
and rational \wgeocs\ $\geocs(G*_KH,X)$.
\end{proposition}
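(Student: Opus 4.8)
The plan is to describe the geodesics and \geocon\ words of $G*_KH$ explicitly, using the normal form and conjugacy theorems for free products with amalgamation, and thereby exhibit $\geocl(G*_KH,X)$ as a regular language; rationality of $\geocs$ then follows formally. Since $K\subseteq G$ and $K\subseteq H$, the generating set splits as a disjoint union $X=P\sqcup Q\sqcup(K\setminus\{1\})$, where $P:=G\setminus K$ and $Q:=H\setminus K$. The key structural feature of this generating set is that \emph{every} nonidentity element of a factor is a single letter of $X$, so no internal geometry of the finite factors intervenes and all length bookkeeping reduces to counting syllables.

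First I would identify the geodesics. If a word of length at least $2$ has two consecutive letters lying in a common factor (both in $G$, or both in $H$), their product is a single element of that factor and the pair can be replaced by a shorter word; hence a geodesic of length $\geq 2$ can contain no letter of $K\setminus\{1\}$ (such a letter lies in both factors, so would combine with either neighbour) and must strictly alternate between $P$ and $Q$. Conversely, the normal form theorem for amalgamated products shows that any such factor-alternating word is reduced, so its word length equals its syllable length and it is geodesic. Thus $|\pi(w)|_X$ equals the syllable length of $\pi(w)$, and $\geol(G*_KH,X)$ is exactly $\{\la\}\cup X$ together with the factor-alternating words over $P\cup Q$.

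Next I would pass to conjugacy, using that $\geocl\subseteq\geol$ always, so a \geocon\ word is a geodesic $w$ for which $\pi(w)$ has minimal length in its conjugacy class. Invoking the conjugacy theorem for free products with amalgamation---in the form that cyclic reduction never increases syllable length and that a cyclically reduced element attains the minimal syllable length in its class---I would classify the geodesics into three types. First, $\la$ and every single letter are \geocon\ words, since a nonidentity factor element has $|g|_c=1$. Second, a factor-alternating word of even length $2m$ has its first and last letters in different factors, so it is cyclically reduced with no conjugate of smaller syllable length, hence is a \geocon. Third, a factor-alternating word of odd length $2m+1\geq 3$ has its first and last letters in a common factor; a cyclic permutation brings them together, they merge, and one obtains a conjugate of strictly smaller syllable length, so such a word is never a \geocon. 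This gives $\geocl(G*_KH,X)=\{\la\}\cup X\cup(PQ)^+\cup(QP)^+$.

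The right-hand side is a finite union of concatenations and Kleene stars of finite languages, hence regular; regularity of $\geocl$ then yields rationality of $\geocs$ by the general fact (recalled in the introduction) that regular languages have rational strict growth series, and indeed one reads off $\geocs(G*_KH,X)=1+|X|z+\frac{2|P||Q|z^2}{1-|P||Q|z^2}$. The main obstacle is the conjugacy step: I must apply the amalgamation conjugacy theorem carefully enough to guarantee both that the minimal conjugacy length of a class equals the syllable length of its cyclically reduced representatives, and that an even-length alternating element is never conjugate into a factor. This is exactly where finiteness of $G$, $H$, and $K$ earns its keep, since it makes each nonidentity factor element a single generator; consequently the delicate $K$-conjugacy relations among factor elements, while governing which words represent the same class, never affect lengths and so do not enter the description of the language.
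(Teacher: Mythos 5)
Your proposal is correct and takes essentially the same route as the paper: the same decomposition of $X$ into $X_G=G\setminus K$, $X_H=H\setminus K$, and $K\setminus\{1\}$, the same characterization of geodesics as factor-alternating words, and the same appeal to the conjugacy theorem for amalgamated products (Lyndon--Schupp, Theorem~IV.2.8) to conclude that the conjugacy geodesics are exactly the cyclically reduced products, giving the regular language $\{\lambda\}\cup X\cup (X_GX_H)^+\cup(X_HX_G)^+$. If anything, your version is more detailed than the paper's (which compresses the geodesic and conjugacy analysis into two sentences and has a typographical slip in its displayed regular expression, repeating $(X_GX_H)^*$ where $(X_HX_G)^*$ is intended), and your explicit rational form of the series is a correct addition.
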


\begin{proof}
Let $X_G:=G \setminus K$, $X_H:=H \setminus K$, and
$X_K:=K \setminus \{1\}$; then $X=X_K \cup X_G \cup X_H$.
Given a sequence $x_1,...,x_n$ of elements of $X$,
this sequence is called {\em cyclically reduced} if
either $n=1$ and $x_1 \in X$ 
or else $n >1$ and 
for each $1 \le i \le n$ the elements $x_i$ and $x_{i+1(\text{mod } n)}$
lie in $X_G \cup X_H$ and 
are from different factors (i.e. if $x_i \in X_G$ 
then $x_{i+1} \in X_H$
and vice versa), and the product $x_1 \cdots x_n$ in $G *_K H$ 
is the associated {\em cyclically reduced product}.
Every element of $G *_K H$ is conjugate to
a cyclically reduced product.  Moreover,
by~\cite[Theorem~IV.2.8]{lyndonschupp}, given any cyclically
reduced sequence $x_1,...,x_n$ with $n \ge 2$ and any $g \in G *_K H$, 
every product of a cyclically reduced sequence $x_1',...,x_k'$ satisfying
$gx_1 \cdots x_kg^{-1} =_{G *_K H} x_1' \cdots x_k'$ can be
obtained by cyclically permuting the original sequence $x_1,...,x_n$
and conjugating the resulting product by an element of $K$.
Now every \geocon \ word over $X$ must be 
a cyclically reduced product, and this theorem implies
also that every cyclically reduced product is a \geocon.
That is, 
\[\geocl(G *_K H,X)=\{\lambda\} \cup X \cup (X_GX_H)^* \cup (X_GX_H)^*,\]
giving a regular expression for the language $\geocl(G *_K H,X)$.
\end{proof}

\section{Open questions}\label{sec:open}

We remark that intermediate between the two {\conjl}s 
defined in Section~\ref{sec:intro}
is a third set of words given by the subset of $\sphl(G,X)$
defined by
\[\eqcl=\eqcl(G,X):=\{y_g \mid g \in G, |g|=|g|_c\}
~,\]
which we refer to as the {\em \weqcl}
for $G$ over $X$.  
It is immediate from the
definitions that $\sphcl \subseteq \eqcl \subseteq \geocl$.
We denote the strict growth series for this language by
\[\eqcs=\eqcs(G,X):=f_{\eqcl(G,X)},\]
called the {\weqcs}.
In~\cite[Corollary~14.1]{rivin} 
Rivin gives a rational function
formula for the
\weqcs\ (which he denotes by $\mathcal{F}[C_{F_k}](z)$) 
for a finitely generated free group with respect to
a free basis (see also ~Example~\ref{ex:f2geocs} for
the rank 2 case), and so in this case the free group does not
give an obstruction to rationality being preserved by free products.  

\begin{question}
If $G$ is a graph product of finitely many
groups $G_i$ and each group $G_i$
has a finite inverse-closed generating set $X_i$ such
that $\eqcl(G_i,X_i)$ is a regular language,
is the language $\eqcl(G,\cup_i X_i)$ regular?
Is rationality of the \weqcs\ $\eqcs(G,X)$ preserved by
direct and free products? 
\end{question}

In some cases, regularity of languages and 
rationality of growth series
associated to groups are known to depend upon
the generating set chosen.  For example,
Stoll~\cite{stoll} has shown that rationality of the
usual (cumulative) growth series $b_{\sphl(G,X)}$
(and hence also of the \wsphs\ $\sphs(G,X)=f_{\sphl(G,X)}$)
depends upon the generating set for the
higher Heisenberg groups, and Cannon~\cite[p.~268]{neushap}
has shown that regularity of the \wgeol\ $\geol(\Z^2 \rtimes \Z_2,X)$
depends
on the generating set $X$ for a semidirect product of
$\Z^2$ by the cyclic group of order 2.
In the case of conjugacy growth, Hull and 
Osin~\cite[Theorem~1.3]{hullosin} have shown 
an example of a finitely generated group $G$
with a finite index subgroup $H$
such that the conjugacy growth function $\beta_{\sphcl(G,X)}$
grows exponentially, but $H$ has only two conjugacy classes.
Then the spherical and \wgeocs\ for $G$ are both infinite
series, but these two series for $H$ are both polynomials.

\begin{question}
Does there exist a finite inverse-closed generating set $X$ for the free 
group $F$ on two generators 
such that $\sphcl(F,X)$ is regular?
\end{question}

For the free basis which gives a non-regular \wsphcl\ 
(Proposition~\ref{prop:freegpscl}), we also consider
formal language theoretic classes that are less 
restrictive than context-free languages.

\begin{question}
Let $F=F(a,b)$ be the free group on generators $a$ and $b$. Is
$\sphcl(F,\{a^{\pm 1},b^{\pm 1}\})$  an indexed language?
A context-sensitive language?
\end{question}

In Corollary~\ref{cor:freesubscl} we show that the
\wsphcl\ cannot be regular (or indeed context-free)
in any group that contains
a free subgroup as a direct or free factor, with respect to
a generating set that is the union of the free basis and
the generators of the other factor.  
This leads us to wonder whether free groups are ``poison
subgroups'' from the viewpoint of regular {\wsphcl}s.

\begin{question}
Let $G$ be a group with $F=F(a,b)$ as subgroup, and let
$X$ be an inverse-closed generating set for $G$.
Can $\sphcl(G,X)$ be a regular language?
\end{question}

As we remarked in Section~\ref{sec:intro}, 
Cannon has shown 
that for word hyperbolic groups, the \wgeol\ 
for every finite generating set is 
regular~\cite[Chapter~3]{echlpt}.  

\begin{question}
Let $G$ be a word hyperbolic group and let $X$
be a finite generating set for $G$.  Is the
\wgeocl\ $\geocl(G,X)$ necessarily regular?
\end{question}

\section*{Acknowledgments}

The authors were partially supported by the Marie Curie Reintegration Grant 230889.
The authors thank Mark Brittenham for helpful discussions.


\bigskip

\textsc{L. Ciobanu,
Mathematics Department,
University of Fribourg,
Chemin du Muse\'e 23,
CH-1700 Fribourg, Switzerland
}

\emph{E-mail address}{:\;\;}\texttt{laura.ciobanu@unifr.ch}
\medskip

\bigskip

\textsc{S. Hermiller,
Department of Mathematics,
University of Nebraska,
Lincoln, NE 68588-0130 USA 
}

\emph{E-mail address}{:\;\;}\texttt{smh@math.unl.edu}
\medskip

\vfill\eject

\appendix
\section{Knuth-Bendix algorithm for graph products}\label{app}

The goal of this appendix is to prove the following theorem.

\begin{theorem}\label{thm:kb}
A graph product of finitely many groups admits a 
weightlex complete rewriting system that is compatible
with shortlex complete rewriting systems on the
vertex groups.
\end{theorem}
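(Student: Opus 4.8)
The plan is to assume that each vertex group $G_i$ is equipped with a shortlex complete rewriting system $S_i$ over $X_i$ (with respect to the fixed order $<_i$), and to assemble these into a single rewriting system $R$ on the alphabet $T$ that presents $G$ and is weightlex complete. I would take the rules of $R$ to be the three families (R0), (R1), (R2) described above, where the head-rewriting families (R0) and (R1) are the lifts to $T^*$ of the rules of $S_i$: a length-reducing $S_i$-rule lifts to an (R0)-rule and a length-preserving, lex-reducing $S_i$-rule lifts to an (R1)-rule, in each case carrying the attached tail sets $t_1,\dots,t_m$ along, while (R2) records the merging of a generator into a preceding clique letter coming from the graph product. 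Because the single-vertex behaviour of $R$ then reproduces $S_i$ verbatim, compatibility with the $S_i$ holds by construction, and the content of the theorem is that $R$ is complete. Before addressing completeness I would verify that $(T,R)$ is a monoid presentation of $G$: each rule visibly fixes the element of $G$ represented, and conversely the defining relations of $G$ are consequences of $R$, since the relations of each $G_i$ are recovered from (R0) and (R1) (as $S_i$ presents $G_i$) and the commutators of adjacent vertex groups are recovered from (R2).

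For termination I would use the weightlex order on $T^*$ introduced above, which is well-founded and compatible with concatenation. Each (R0)-rule removes $m-k>0$ head letters (re-emitting their tails), so it strictly decreases the total weight of the word, namely the sum of $wt(t)$ over its letters; each (R1)-rule and each (R2)-rule preserves the total weight but strictly decreases the lexicographic component relative to the partial order on $T$ --- for (R1) because $a_1\cdots a_m >_i b_1\cdots b_m$, and for (R2) because $\{a,t\} < t$. Hence no word admits an infinite rewriting sequence and $R$ is terminating. By Newman's lemma \cite[Chapter~2]{sims} it then suffices to establish local confluence, so the rest of the argument is a critical-pair analysis: I would list, up to symmetry, the ways in which two left-hand sides of rules of $R$ can overlap, and for each resulting critical pair exhibit a common descendant.

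The critical pairs divide into four families, which I would handle in increasing order of difficulty. An overlap of two rules acting at a single vertex $i$ projects, under the map that forgets all tail sets and retains only the $X_i$-heads, to an overlap of $S_i$; since $S_i$ is confluent this critical pair resolves, and one checks that the tail sets released by any (R0)-reduction are redistributed identically along the two branches. An overlap of rules at two distinct (hence adjacent) vertices is essentially disjoint and closes after finitely many shuffles. The two remaining families are the new ones: an (R0)- or (R1)-rule overlapping an (R2)-merge, and two (R2)-merges overlapping; each of these I would resolve by a direct diagram chase, using that both branches terminate and that the merged clique letters are well-defined elements of $T$.

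The hard part will be the mixed critical pairs between the shuffle rule (R2) and the head-rewriting rules (R0)/(R1). An (R2)-step can move a generator into or out of the maximal $X_i$-column on which a vertex reduction is about to act, so one must check that merging-then-reducing and reducing-then-merging reach the same irreducible word; the delicate bookkeeping is tracking the tail sets through an (R0)-reduction, which discards heads but re-emits the tails $t_{k+1}\cdots t_m$ as standalone letters of $T$ that may themselves then become eligible for further merging by (R2). Because the system is already terminating, it is enough to drive each such overlap to a common form rather than to argue about normal forms directly, and Newman's lemma then promotes local confluence to global confluence. Together with termination this shows that $R$ is a weightlex complete rewriting system for $G$ compatible with the systems $S_i$, which is the assertion of the theorem.
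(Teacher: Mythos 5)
Your proposal follows essentially the same route as the paper's own proof in the Appendix: lift the shortlex complete systems $S_i$ to the clique alphabet $T$ as head-rewriting rules together with the merge rule (R2), prove termination via the weightlex ordering exactly as you describe (weight drops under length-reducing lifts, lex drops under the length-preserving and merge rules), and establish confluence through a Knuth--Bendix critical-pair analysis, using confluence of each $S_i$ to resolve same-vertex overlaps. The only difference is execution rather than approach: the paper explicitly resolves every overlap family (its cases 22ext/int, 12ext, 21ext/int, 11ext/int), including the mixed merge-versus-head-rule pairs you correctly single out as the delicate bookkeeping, whereas you defer those diagram chases.
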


Let $\lm$ be a finite simplicial graph with $n$ vertices $v_1,...,v_n$
and suppose that for each $1 \le i \le n$ the vertex $v_i$ is labeled by
a group $G_i$ that has a finite inverse-closed generating set $X_i$.
For two words $u,w \in X_i^*$, we write
$u \ei w$ if $u$ and $w$ represent the same element of $G_i$. 
Let $G$ be the associated graph product
with generating set $X:=\cup_{i=1}^n X_i$.  For
words $y,z \in X^*$, write $y=_Gz$ if $y$ and $z$ represent
the same element of $G$.

As in Section~\ref{sec:geodesic}, 
given any subset $t$ of $X$, we define the {\it support}
$supp(t)$ of $t$ to be the set of all vertices $v_i$ of $\lm$ such that
$t$ contains an element of $X_i$. Let 
$T$ be the set of all nonempty subsets $t$ of $X$ satisfying the
properties that $supp(t)$ is a clique of $\lm$, and for each $v_i \in supp(t)$,
the intersection $t \cap X_i$ is a single element of $X_i$.
Each $t=\{a_1,...,a_k\} \in T$ determines a
well-defined element 
$a_1 \cdots a_k$ of $G$, and each $a \in X$
is the element of $G$ associated to $\{a\} \in T$.
By slight abuse of notation, we will consider
$X \subseteq T \subseteq G$, so that $T$ is another
inverse-closed generating set for $G$.

For each index $i$ fix a total ordering on 
$X_i$, and let $<_i$
denote the corresponding shortlex ordering on $X_i^*$.
To ease notation, we
let the empty set $\{ \}$ denote the empty word over $T$.
Whenever $t \in T \cup \{\emptyset\}$, $a \in X$, $t \cup \{a\} \in T$,
and $t \cap \{a\} = \emptyset$, 
we let $\{a,t\}$ denote the set $t \cup \{a\}$.

For each $1 \le i \le n$, let $(X_i,S_i)$ be a complete
rewriting system for $G_i$ that is compatible with
respect to the shortlex ordering $>_i$.  That is, 
$S_i \subseteq X_i^* \times X_i^*$ satisfies
the properties that for each $(u,v) \in S_i$
the inequality $u >_i v$ holds; 
the rewriting rules $xuy \ra_i xvy$ whenever $x,y \in X_i^*$
and $(u,v) \in S_i$ are {\it confluent}, in that
whenever
a word $w$ rewrites two words $w \ra_i w'$ and $w \ra_i w''$
using these rules, then there is a word $w'''$ such
that $w' \sr{*}{\ra_i} w'''$ and $w' \sr{*}{\ra_i} w'''$;
and the group $G_i$ is presented as a monoid by 
$Mon \langle X_i \mid \{u=v \mid (u,v) \in S_i \}\rangle$.

We define two sets of rewriting rules on words over $T$
as follows.  

\begin{description}

\item[(R1)] $\{a_1,t_1\} \cdots \{a_m,t_m\} \ra
\{b_1,t_1\} \cdots \{b_{k},t_{k}\} t_{k+1} \cdots t_m$ whenever
there exists an index $1 \le i \le n$ such that for all $1 \le j \le m$,
each $a_j,b_j \in X_i$, $t_j \in T \cup \{\emptyset\}$,
$\{a_j,t_j\} \in T$; and
$a_1 \cdots a_m \ra_i b_1 \cdots b_k$.

\item[(R2)] $t\{a,t'\} \ra \{a,t\}t'$ whenever
$a \in X$, $t \in T$, $t' \in T \cup \{\emptyset\}$,
and $\{a,t\}, \{a,t'\} \in T$.
\end{description}
(In the case that for each $1 \le i \le n$ the complete
rewriting system $S_i$ includes all rules
$w \ra_i z$ for all $w,z \in X_i^*$ satisfying
$w \ei z$ and $w >_i z$, then the rules (R1) above
are exactly the rules (R0),(R1) defined in Section~\ref{sec:geodesic}.)

Whenever $1 \le j \le 2$, $\alpha \subseteq \{1,2\}$ and $w,x \in T^*$,
we write $w \sr{Rj}{\ra} x$ if $w$ rewrites to $x$ via
exactly one application of rule (R$j$), and
$w \sr{R\alpha*}{\ra} x$ if $x$ can be obtained
from $w$ by a finite (possibly zero) number of rewritings
using rules of the type (R$j$) for $j \in \alpha$.

Let $R$ denote the set of all rewriting rules of the
form (R1) and (R2).  Note that the generating
set $T$ of $G$ together with the relations given by
the rules of $R$ form a monoid presentation of $G$;
hence, $(T,R)$ is a {\it rewriting system} for the
graph product group $G$.  

\begin{proposition}\label{prop:app}
The rewriting system $(T,R)$ for $G$ is {\em complete}.
That is, $R$ is {\em terminating} (no word $w \in T^*$
can be rewritten infinitely many times)
and $R$ is {\em confluent} (whenever
a word $w$ rewrites two words $w \ra w'$ and $w \ra w''$
using these rules, then there is a word $w'''$ such
that $w' \sr{R0-2*}{\ra} w'''$ and $w' \sr{R0-2*}{\ra} w'''$).
\end{proposition}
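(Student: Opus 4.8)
The plan is to establish completeness of $(T,R)$ in the two required parts, \emph{termination} and \emph{confluence}, treating the latter via a critical-pair analysis in the style of the Knuth--Bendix procedure \cite[Chapter~2]{sims}.

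For termination, I would reuse the weightlex ordering on $T^*$ already introduced in Section~\ref{sec:geodesic}: compare two words first by total weight, where $wt(t)=|t|$ for $t\in T$, and break ties lexicographically using the partial order on $T$. It then suffices to check that every rule strictly decreases this ordering. An (R1) rule lifted from an $S_i$-rule $a_1\cdots a_m \ra_i b_1\cdots b_k$ lowers the total weight by $m-k\ge 0$; when $m>k$ the weight drops strictly, and when $m=k$ the weight is unchanged but the shortlex inequality $a_1\cdots a_m >_i b_1\cdots b_k$, together with the relation $\{a,t\}>\{b,t\}$ for $a>_i b$, forces a strict lexicographic decrease. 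An (R2) rule $t\{a,t'\}\ra \{a,t\}t'$ preserves total weight and decreases lexicographically since $\{a,t\}<t$. As the weightlex ordering is well-founded and compatible with concatenation, $R$ is terminating.

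Since $R$ is terminating, Newman's Lemma reduces confluence to local confluence, and the Critical Pair Lemma \cite[Chapter~2]{sims} further reduces local confluence to the joinability of the critical pairs obtained from overlaps of two left-hand sides. I would organize these overlaps by the rule types involved. The (R1)/(R1) overlaps at a common index $i$ project, via the $X_i$-components $a_1\cdots a_m$ of the overlapping letters, onto overlaps of two $S_i$-rules; these resolve because $S_i$ is confluent by hypothesis, and the tails $t_j$, which are invisible to $S_i$-reductions, simply ride along. For distinct indices $i\neq j$, an overlap can occur only at $T$-letters whose support contains both $v_i$ and $v_j$, where the two rules modify the independent $X_i$- and $X_j$-coordinates of a common clique set; here one checks that the reductions commute, tracking how a length-reducing (R1) rule splits off tails, so that the pair resolves.

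The overlaps involving (R2) are the crux, and I expect them to be the main obstacle. For (R2)/(R2) the two redexes overlap either in a shared middle letter (three consecutive letters, each adjacent pair a redex) or within a single two-letter word when the right-hand clique set admits two generators that can each be pulled left; in each configuration I would complete the two shuffles in the opposite order and verify that they meet, using that commutation with a clique set is preserved under the relevant union and deletion of generators. For (R1)/(R2) the shuffle either inserts a generator into, or extracts one from, a block acted on by an (R1) rule, and here one must confirm that the $S_i$-reducibility recorded by the (R1) rule survives the reorganization, neither destroyed nor spuriously duplicated, in its appropriately relocated form. I would control this by passing to the underlying element of $G$ and its $X_i$-projection, where the well-definedness of $T$-letters (any ordering of a clique set names the same element of $G$) together with confluence of $S_i$ guarantees that both branches reduce to the same irreducible word. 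Collecting these resolutions yields local confluence and, with termination, completes the proof.
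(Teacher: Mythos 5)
Your termination argument is exactly the paper's: the weightlex ordering induced by $wt$ and the partial order on $T$ is decreased by every rule, and this half of your proposal is correct. Your confluence strategy also follows the same route as the paper --- Knuth--Bendix critical pair analysis organized by the types of the two overlapping rules --- so the question is whether your sketch actually discharges the critical pairs. It does not, and this is where the genuine gap lies: the entire content of the paper's proof is the explicit resolution of each overlap (the cases 22ext, 22int, 12ext, 21ext, 21int, 11ext, 11int, several with subcases according to whether tails $t_j$ or right-hand letters $\{b_j,t_j\}$ become empty), whereas in your proposal each of these is replaced by a statement of intent (``I would complete the two shuffles in the opposite order and verify that they meet,'' ``one must confirm that the $S_i$-reducibility\dots survives''). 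A Knuth--Bendix proof consists precisely of these verifications; without them nothing has been proved.

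Moreover, the one concrete mechanism you do propose for the hardest overlaps --- the (R1)/(R2) interactions --- is circular. You suggest passing to the underlying element of $G$ and concluding that both branches ``reduce to the same irreducible word.'' But in a terminating rewriting system, uniqueness of irreducible representatives of group elements is equivalent to confluence; it is exactly the property being proved, so it cannot be invoked. Confluence of $S_i$ only controls rewriting of words over $X_i$ inside $G_i$; it says nothing about whether two $R$-reduction sequences over $T$, in which (R2) shuffles move letters into and out of the $X_i$-blocks, can be rejoined. That interaction is precisely what the paper's cases 12ext, 21ext, 21int (and the option b subcases of 11ext and 11int) check by hand. Relatedly, your claim that in same-index (R1)/(R1) overlaps the tails ``simply ride along'' is inaccurate: when the underlying $S_i$-rule is length-reducing ($k<m$), the tails $t_{k+1},\dots,t_m$ detach as stand-alone letters, and the two branches only rejoin after additional $\sr{R2*}{\ra}$ shuffles and a reindexing of the tails (this is the role of the words $t_l''$ in the paper's 11ext option b and 11int option b). So the skeleton of your proposal matches the paper, but the proof itself --- the case analysis --- is missing, and the shortcut offered in its place does not work.
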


\begin{proof}
Define a partial ordering on $T$ by $\{a,t\} > \{b,t\}$ whenever
$a,b \in X_i$ for some $i$, $a >_i b$, $t \in T \cup \emptyset$,
and $\{a,t\} \in T$; and by
$\{a,t\} < t$ whenever $a \in X$ and $t, \{a,t\} \in T$.
For each $t$ in $T$, define the {\em weight} $wt(t)$
of $t$ to be the number of elements of $t$ as a
subset of $X$ (equivalently, $wt(t)$ is the
number of vertices in $supp(t)$).
Then all of the
rules in the rewriting system $R$ decrease the associated
weightlex ordering on $T^*$.  Since the weightlex ordering
is compatible with concatenation and well-founded,
no word $w \in T^*$
can be rewritten infinitely many times; that is, the
system $R$ is {\it terminating}.

Next we check via the Knuth-Bendix algorithm
that this system is also {\it confluent}.
There are two types of overlapping rules, or
{\em critical pairs}, we need to check:
\begin{description}
\item[External] $rs \ra u, st \ra v \in R$ with $s \ne \lambda$.
\item[Internal] $rst \ra u, s \ra v \in R$.
\end{description}

We denote a critical pair between an (R$k$) rule and an
(R$l$) rule ($1 \le k,l \le 2$)
that is of external type by $kl$ext, etc.
For any rewriting rule
$a_1 \cdots a_m \ra_i b_1 \cdots b_k$ in $S_i$
with each $a_j,b_j \in X_i^*$  and $k<m$, we let
$b_{k+1} = \cdots = b_{m} = \emptyset$.
Checking critical pairs:

\bigskip

\noindent {\bf 22ext:}

\noindent If $t' \neq \emptyset$:

$\uu{t\{a,t'\}}\{b,t''\} \sr{R2}{\ra} \{a,t\}\uu{t'\{b,t''\}} \sr{R2}{\ra} \{a,t\}\{b,t'\}t''$
\hspace{.15in} and

$t\uu{\{a,t'\}\{b,t''\}} \sr{R2}{\ra} \uu{t\{a,b,t'\}}t'' \sr{R2}{\ra} \{a,t\}\{b,t'\}t''$.

\bigskip

\noindent If $t' = \emptyset$:

$\uu{t\{a\}}\{b,t''\} \sr{R2}{\ra} \{a,t\}\{b,t''\}$
\hspace{.15in} and

$t\uu{\{a\}\{b,t''\}} \sr{R2}{\ra} \uu{t\{a,b\}}t'' \sr{R2}{\ra} \{a,t\}\uu{bt''} \sr{R2*}{\ra} 
     \{a,t\}\{b,t''\}$.

\bigskip

\noindent {\bf 22int:}

$\uu{t\{a,b,t'\}} \sr{R2}{\ra} \uu{\{a,t\}\{b,t'\}} \sr{R2}{\ra} \{a,b,t\}t'$ \hspace{.15in} and

$\uu{t\{a,b,t'\}} \sr{R2}{\ra} \uu{\{b,t\}\{a,t'\}} \sr{R2}{\ra} \{a,b,t\}t'$. 

\bigskip

\noindent {\bf 12ext:}

\noindent If $\{b_m,t_m\} \neq \emptyset$: 

$\uu{\{a_1,t_1\} \cdots \{a_m,t_m\}}\{c,t'\} \sr{R1}{\ra} 
\{b_1,t_1\} \cdots \uu{\{b_m,t_m\}\{c,t'\}} \sr{R2}{\ra}
\{b_1,t_1\} \cdots \{c,b_m,t_m\}t'$ 

and

$\{a_1,t_1\} \cdots \uu{\{a_m,t_m\}\{c,t'\}} \sr{R2}{\ra} 
\uu{\{a_1,t_1\} \cdots \{c,a_m,t_m\}}t' \sr{R1}{\ra}
\{b_1,t_1\} \cdots \{c,b_m,t_m\}t'$.

\bigskip

\noindent If $\{b_m,t_m\} = \emptyset$:

$\uu{\{a_1,t_1\} \cdots \{a_m,t_m\}}\{c,t'\} \sr{R1}{\ra} 
\{b_1,t_1\} \cdots \{b_{m-1},t_{m-1}\}\{c,t'\}$

and

$\{a_1,t_1\} \cdots \uu{\{a_m,t_m\}\{c,t'\}} \sr{R2}{\ra} 
\uu{\{a_1,t_1\} \cdots \{c,a_m,t_m\}}t' \sr{R1}{\ra}$

$~$ \hspace{.15in} $\{b_1,t_1\} \cdots \{b_{m-1},t_{m-1}\}\uu{ct'} \sr{R2*}{\ra}
\{b_1,t_1\} \cdots \{b_{m-1},t_{m-1}\}\{c,t'\}$.

\bigskip

\noindent {\bf 21ext option a:}

\noindent If $t_1,...,t_{m-1} \neq \lambda$:

$\uu{t'\{a_1,t_1\}} \cdots \{a_m,t_m\} \sr{R2}{\ra} 
\{a_1,t'\}\uu{t_1\{a_2,t_2\} \cdots \{a_m,t_m\}} \sr{R2*}{\ra}$

$~$ \hspace{.15in} $\uu{\{a_1,t'\}\{a_2,t_1\} \cdots \{a_m,t_{m-1}\}}t_m  \sr{R1}{\ra}
\{b_1,t'\}\{b_2,t_1\} \cdots \{b_m,t_{m-1}\}t_m$ 

and

$t'\uu{\{a_1,t_1\} \cdots \{a_m,t_m\}} \sr{R1}{\ra} 
\uu{t'\{b_1,t_1\} \cdots \{b_m,t_m\}} \sr{R2*}{\ra}$

$~$ \hspace{.15in} $\{b_1,t'\}\{b_2,t_1\} \cdots \{b_m,t_{m-1}\}t_m$.

\bigskip

\noindent If $t_1,...,t_{i-1} \neq \lambda$ and $t_i=\lambda$:

$\uu{t'\{a_1,t_1\}} \cdots \{a_m,t_m\} \sr{R2}{\ra} $

$~$ \hspace{.15in} $\{a_1,t'\}\uu{t_1\{a_2,t_2\} \cdots \{a_m,t_m\}} \sr{R2*}{\ra}$

$~$ \hspace{.15in} $\uu{\{a_1,t'\}\{a_2,t_1\} \cdots \{a_{i},t_{i-1}\}\{a_{i+1},t_{i+1}\} 
    \cdots \{a_m,t_m\}}     \sr{R1}{\ra}$
    
$~$ \hspace{.15in} $\{b_1,t'\}\{b_2,t_1\} \cdots \{b_{i},t_{i-1}\}\{b_{i+1},t_{i+1}\} 
          \cdots \{b_m,t_m\}$ 
          
and

$t'\uu{\{a_1,t_1\} \cdots \{a_m,t_m\}} \sr{R1}{\ra}$ 

$~$ \hspace{.15in} $\uu{t'\{b_1,t_1\} \cdots \{b_{i-1},t_{i-1}\}} \{b_{i},t_{i}\} 
             \cdots \{b_m,t_m\} \sr{R2*}{\ra}$

$~$ \hspace{.15in} $\{b_1,t'\}\{b_2,t_1\} \cdots \{b_{i},t_{i-1}\}{\color{green}t_i}
      \{b_{i+1},t_{i+1}\} \cdots \{b_m,t_m\} =$

$~$ \hspace{.15in} $\{b_1,t'\}\{b_2,t_1\} \cdots \{b_{i},t_{i-1}\}\{b_{i+1},t_{i+1}\} 
       \cdots \{b_m,t_m\}$.

\eject

\noindent {\bf 21ext option b:}

$\uu{t'\{c,a_1,t_1\}} \cdots \{a_m,t_m\} \sr{R2}{\ra} 
\{c,t'\}\uu{\{a_1,t_1\} \cdots \{a_m,t_m\}} \sr{R1}{\ra}
\{c,t'\}\{b_1,t_1\} \cdots \{b_m,t_m\}$ 

and

$t'\uu{\{c,a_1,t_1\} \cdots \{a_m,t_m\}} \sr{R1}{\ra} 
\uu{t'\{c,b_1,t_1\}} \cdots \{b_m,t_m\}  \sr{R2}{\ra}
\{c,t'\}\{b_1,t_1\} \cdots \{b_m,t_m\}$.

\bigskip

\noindent {\bf 21int:}

\noindent If $\{b_{i-1},t_{i-1}\} \neq \emptyset$:

$\uu{\{a_1,t_1\} \cdots \{a_{i-1},t_{i-1}\}\{c,a_i,t_i\} \cdots \{a_m,t_m\}} \sr{R1}{\ra}$

$~$ \hspace{.15in} $\{b_1,t_1\} \cdots \uu{\{b_{i-1},t_{i-1}\}\{c,b_i,t_i\}} \cdots
  \{b_m,t_m\} \sr{R2}{\ra}$
  
$~$ \hspace{.15in}   
$\{b_1,t_1\} \cdots \{c,b_{i-1},t_{i-1}\}\{b_i,t_i\} \cdots \{b_m,t_m\}$ 

and

$\{a_1,t_1\} \cdots \uu{\{a_{i-1},t_{i-1}\}\{c,a_i,t_i\}} \cdots \{a_m,t_m\} \sr{R2}{\ra}$

$~$ \hspace{.15in}
$\uu{\{a_1,t_1\} \cdots \{c,a_{i-1},t_{i-1}\}\{a_i,t_i\} \cdots \{a_m,t_m\}} \sr{R1}{\ra}$

$~$ \hspace{.15in}
$\{b_1,t_1\} \cdots \{c,b_{i-1},t_{i-1}\}\{b_i,t_i\} \cdots \{b_m,t_m\}$.

\bigskip

\noindent If $\{b_{i-1},t_{i-1}\} = \emptyset$:

$\uu{\{a_1,t_1\} \cdots \{a_{i-1},t_{i-1}\}\{c,a_i,t_i\} \cdots \{a_m,t_m\}} \sr{R1}{\ra}$

$~$ \hspace{.15in}
$\{b_1,t_1\} \cdots \{b_{i-2},t_{i-2}\}\{c,t_i\} \cdots t_m$ 

and

$\{a_1,t_1\} \cdots \uu{\{a_{i-1},t_{i-1}\}\{c,a_i,t_i\}} \cdots \{a_m,t_m\} \sr{R2}{\ra}$

$~$ \hspace{.15in}
$\uu{\{a_1,t_1\} \cdots \{c,a_{i-1},t_{i-1}\}\{a_i,t_i\} \cdots \{a_m,t_m\}} \sr{R1}{\ra}$

$~$ \hspace{.15in}
$\{b_1,t_1\} \cdots \{b_{i-2},t_{i-2}\} \uu{c t_{i}}t_{i+1} \cdots t_m \sr{R2*}{\ra}$

$~$ \hspace{.15in}
$\{b_1,t_1\} \cdots \{b_{i-2},t_{i-2}\}\{c,t_i\}t_{i+1} \cdots t_m$.

\bigskip

\noindent {\bf 11ext option a:}

$\uu{\{a_1,t_1\} \cdots \{a_{i},t_{i}\}\{a_{i+1},c_1,t_1'\} \cdots \{a_m,c_{m-i},t_m'\}} 
    \{c_{m-i+1},t_{m-i+1}'\} \cdots \{c_l,t_l'\}        \sr{R1}{\ra}$
    
$~$ \hspace{.15in}    
$\{b_1,t_1\} \cdots \{b_{i},t_{i}\}\uu{\{b_{i+1},c_1,t_1'\} \cdots \{b_m,c_{m-i},t_m'\} 
    \{c_{m-i+1},t_{m-i+1}'\} \cdots \{c_l,t_l'\}}        \sr{R1}{\ra}$
    
$~$ \hspace{.15in}    
$\{b_1,t_1\} \cdots \{b_{i},t_{i}\}\{b_{i+1},d_1,t_1'\} \cdots \{b_m,d_{m-i},t_m'\} 
    \{d_{m-i+1},t_{m-i+1}'\} \cdots \{d_l,t_l'\}$      

and

$\{a_1,t_1\} \cdots \{a_{i},t_{i}\}\uu{\{a_{i+1},c_1,t_1'\} \cdots \{a_m,c_{m-i},t_m'\} 
    \{c_{m-i+1},t_{m-i+1}'\} \cdots \{c_l,t_l'\}}        \sr{R1}{\ra}$
    
$~$ \hspace{.15in}    
$\uu{\{a_1,t_1\} \cdots \{a_{i},t_{i}\}\{a_{i+1},d_1,t_1'\} \cdots \{a_m,d_{m-i},t_m'\}} 
    \{d_{m-i+1},t_{m-i+1}'\} \cdots \{d_l,t_l'\}        \sr{R1}{\ra}$
    
$~$ \hspace{.15in}    
$\{b_1,t_1\} \cdots \{b_{i},t_{i}\}\{b_{i+1},d_1,t_1'\} \cdots \{b_m,d_{m-i},t_m'\} 
    \{d_{m-i+1},t_{m-i+1}'\} \cdots \{d_l,t_l'\}$.

\eject

\noindent {\bf 11ext option b:}

\noindent For critical pair $a_1 \cdots a_m \ra_i b_1 \cdots b_k$
and $a_p \cdots a_q \ra_i c_p \cdots c_r$ of $S_i$ (where $p \le m \le q$,
$k \le m$, and $p-1 \le r \le q$):
There are rewritings $b_1 \cdots b_k a_{m+1} \cdots a_q \sr{*}{\ra_i} d_1 \cdots d_s$
and $a_1 \cdots a_{p-1} c_p \dots c_r \sr{*}{\ra_i} d_1 \cdots d_s$.  Then:

\bigskip

\noindent If $t_l \neq \emptyset$ for all $k+1 \le l \le m$:

$\uu{\{a_1,t_1\} \cdots \{a_m, t_m\}} \{a_{m+1}, t_{m+1}\}\cdots \{a_q, t_q\}   \sr{R1}{\ra}$
   
$~$ \hspace{.15in}   
$\{b_1,t_1\} \cdots \{b_k,t_k\} \uu{t_{k+1} \cdots t_{m} \{a_{m+1},t_{m+1}\} \cdots  \{a_q, t_q\}}
   \sr{R2*}{\ra}$
   
$~$ \hspace{.15in}   
$\uu{\{b_1,t_1\} \cdots \{b_k,t_k\} \{a_{m+1},t_{k+1}\} \cdots \{a_{q},t_{k+(q-m)}\}} 
     t_{k+q-m+1} \cdots  t_q \sr{R1*}{\ra}$

$~$ \hspace{.15in}
$\{d_1,t_1\} \cdots \{d_s,t_s\} t_{s+1} \cdots  t_q$

and

$\{a_1,t_1\} \cdots \{a_{p-1}, t_{p-1}\} \uu{\{a_p, t_p\}  \cdots \{a_q, t_q\}}   \sr{R1}{\ra}$

$~$ \hspace{.15in}   
$\uu{\{a_1,t_1\} \cdots \{a_{p-1},t_{p-1}\} \{c_p,t_p\} \cdots \{c_r,t_r\}} 
    t_{r+1} \cdots t_q \sr{R1*}{\ra}$

$~$ \hspace{.15in}
$\{d_1,t_1\} \cdots \{d_s,t_s\} t_{s+1} \cdots  t_q$.  

\bigskip

\noindent If $t_{k+1} \cdots t_m = t_{k+1}' \cdots t_u'$ for some 
$u \le m-1$ with each $t_l' \in T$:

\noindent Let $t_l'':=t_l$ for $1 \le l \le k$, $t_{l}'':=t_{l}'$
for $k+1 \le l \le u$, and $t_l'':=t_{l+m-u}$ for $u+1 \le l \le q-(m-u)$.

$\uu{\{a_1,t_1\} \cdots \{a_m, t_m\}} \{a_{m+1}, t_{m+1}\} \cdots \{a_q, t_q\}   \sr{R1}{\ra}$

$~$ \hspace{.15in}
$\{b_1,t_1\} \cdots \{b_k,t_k\} t_{k+1} \cdots t_{m} \{a_{m+1},t_{m+1}\} \cdots  \{a_q, t_q\} =$
   
$~$ \hspace{.15in}   
$\{b_1,t_1''\} \cdots \{b_k,t_k''\} \uu{t_{k+1}'' \cdots t_{u}'' 
  \{a_{m+1},t_{u+1}''\} \cdots  \{a_q, t_{q-(m-u)}''\}}   
     \sr{R2*}{\ra}$
   
$~$ \hspace{.15in}   
$\uu{\{b_1,t_1''\} \cdots \{b_k,t_k''\} \{a_{m+1},t_{k+1}''\} \cdots \{a_{q},t_{q-(m-k)}''\}} 
     t_{q-(m-k)+1}'' \cdots  t_{q-(m-u)}'' \sr{R1*}{\ra}$

$~$ \hspace{.15in}
$\{d_1,t_1''\} \cdots \{d_s,t_s''\} t_{s+1}'' \cdots  t_{q-(m-u)}''$

and

$\{a_1,t_1\} \cdots \{a_{p-1}, t_{p-1}\} \uu{\{a_p, t_p\} \cdots \{a_q, t_q\}}   \sr{R1}{\ra}$

$~$ \hspace{.15in}
$\uu{\{a_1,t_1\} \cdots \{a_{p-1},t_{p-1}\} \{c_p,t_p\} \cdots \{c_r,t_r\}} 
    t_{r+1} \cdots t_q \sr{R1*}{\ra}$

$~$ \hspace{.15in}
$\uu{\{d_1,t_1\} \cdots \{d_s,t_s\} t_{s+1} \cdots  t_q} \sr{R2*}{\ra}
\{d_1,t_1''\} \cdots \{d_s,t_s''\} t_{s+1}'' \cdots  t_{q-(m-u)}''$.

\bigskip

\noindent {\bf 11int option a:}

$\uu{\{a_1,t_1\} \cdots \{a_{i},t_{i}\}\{a_{i+1},c_1,t_{i+1}\} \cdots \{a_k,c_{k-i},t_k\} 
    \{a_{k+1},t_{k+1}\} \cdots \{a_m,t_m\}}        \sr{R1}{\ra}$
    
$~$ \hspace{.15in}    
$\{b_1,t_1\} \cdots \{b_{i},t_{i}\}\uu{\{b_{i+1},c_1,t_{i+1}\} \cdots \{b_k,c_{k-i},t_k\}}
    \{b_{k+1},t_{k+1}\} \cdots \{b_m,t_m\}        \sr{R1}{\ra}$

$~$ \hspace{.15in}    
$\{b_1,t_1\} \cdots \{b_{i},t_{i}\}\{b_{i+1},d_1,t_{i+1}\} \cdots \{b_k,d_{k-i},t_k\} 
    \{b_{k+1},t_{k+1}\} \cdots \{b_m,t_m\}$

and

$\{a_1,t_1\} \cdots \{a_{i},t_{i}\}\uu{\{a_{i+1},c_1,t_{i+1}\} \cdots \{a_k,c_{k-i},t_k\}} 
    \{a_{k+1},t_{k+1}\} \cdots \{a_m,t_m\}        \sr{R1}{\ra}$
    
$~$ \hspace{.15in}    
$\uu{\{a_1,t_1\} \cdots \{a_{i},t_{i}\}\{a_{i+1},d_1,t_{i+1}\} \cdots \{a_k,d_{k-i},t_k\} 
    \{a_{k+1},t_{k+1}\} \cdots \{a_m,t_m\}}        \sr{R1}{\ra}$

$~$ \hspace{.15in}    
$\{b_1,t_1\} \cdots \{b_{i},t_{i}\}\{b_{i+1},d_1,t_{i+1}\} \cdots \{b_k,d_{k-i},t_k\} 
    \{b_{k+1},t_{k+1}\} \cdots \{b_m,t_m\}$.

\bigskip

\noindent {\bf 11int option b:}

\noindent For critical pair $a_1 \cdots a_m \ra_i b_1 \cdots b_k$
and $a_p \cdots a_q \ra_i c_p \cdots c_r$ of $S_i$ (where 
$1 \le p \le q \le m$,  $k \le m$, and $r \le q$):
There are rewritings $b_1 \cdots b_k \sr{*}{\ra_i} d_1 \cdots d_s$
and $a_1 \cdots a_{p-1} c_p \dots c_r a_{q+1} \cdots a_m \sr{*}{\ra_i} d_1 \cdots d_s$.  Then:

\bigskip

\noindent If $t_l \neq \emptyset$ for all $r+1 \le l \le q$:

$\uu{\{a_1,t_1\} \cdots \{a_m, t_m\}}   \sr{R1}{\ra}
\uu{\{b_1,t_1\} \cdots \{b_k,t_k\}} t_{k+1} \cdots t_{m} 
   \sr{R1*}{\ra}$

$~$ \hspace{.15in}
$\{d_1,t_1\} \cdots \{d_s,t_s\} t_{s+1} \cdots  t_m$

and

$\{a_1,t_1\} \cdots \{a_{p-1}, t_{p-1}\} \uu{\{a_p, t_p\} \cdots 
       \{a_q, t_q\}} \{a_{q+1}, t_{q+1}\} \cdots \{a_m, t_m\}   \sr{R1}{\ra}$
    
$~$ \hspace{.15in}    
$\{a_1,t_1\} \cdots \{a_{p-1},t_{p-1}\} \{c_p,t_p\} \cdots \{c_r,t_r\} 
    \uu{t_{r+1} \cdots t_q \{a_{q+1},t_{q+1}\} \cdots \{a_m, t_m\}}
    \sr{R2*}{\ra}$
    
$~$ \hspace{.15in}    
$\uu{\{a_1,t_1\} \cdots \{a_{p-1},t_{p-1}\} \{c_p,t_p\} \cdots \{c_r,t_r\} 
    \{a_{q+1},t_{r+1}\} \cdots \{a_{m},t_{m-(q-r)}\}} t_{m-(q-r)+1} \cdots t_m   
    \sr{R1*}{\ra}$

$~$ \hspace{.15in}
$\{d_1,t_1\} \cdots \{d_s,t_s\} t_{s+1} \cdots  t_q$.  

\bigskip

\noindent If $t_{r+1} \cdots t_q = t_{r+1}' \cdots t_u'$ for some 
$u \le q-1$ with each $t_l' \in T$:

\noindent Let $t_l'':=t_l$ for $1 \le l \le r$, $t_{l}'':=t_{l}'$
for $r+1 \le l \le u$, and $t_l'':=t_{l+q-u}$ for $u+1 \le l \le m-(q-u)$.

$\uu{\{a_1,t_1\} \cdots \{a_m, t_m\}}   \sr{R1}{\ra}$
   
$~$ \hspace{.15in}   
$\uu{\{b_1,t_1\} \cdots \{b_k,t_k\}} t_{k+1} \cdots t_{m}  \sr{R1*}{\ra}$
   
$~$ \hspace{.15in}
$\uu{\{d_1,t_1\} \cdots \{d_s,t_s\} t_{s+1} \cdots  t_m} \sr{R2*}{\ra}$

$~$ \hspace{.15in}
$\{d_1,t_1''\} \cdots \{d_s,t_s''\} t_{s+1}'' \cdots  t_{m-(q-u)}''$

and

$\{a_1,t_1\} \cdots \{a_{p-1}, t_{p-1}\} \uu{\{a_p, t_p\} \cdots 
       \{a_q, t_q\}} \{a_{q+1}, t_{q+1}\} \cdots \{a_q, t_q\}   \sr{R1}{\ra}$

$~$ \hspace{.15in}
$\{a_1,t_1\} \cdots \{a_{p-1},t_{p-1}\} \{c_p,t_p\} \cdots \{c_r,t_r\} 
    t_{r+1} \cdots t_q \{a_{q+1},t_{q+1}\} \cdots \{a_m, t_m\} =$

$~$ \hspace{.15in}
$\{a_1,t_1''\} \cdots \{a_{p-1},t_{p-1}''\} \{c_p,t_p''\} \cdots \{c_r,t_r''\} 
    \uu{t_{r+1}'' \cdots t_u'' \{a_{q+1},t_{u+1}''\} \cdots \{a_m, t_{m-(q-u)}''\}} 
\sr{R2*}{\ra}$

$~$ \hspace{.15in}
$\uu{\{a_1,t_1''\} \cdots \{a_{p-1},t_{p-1}''\} \{c_p,t_p''\} \cdots \{c_r,t_r''\} 
     \{a_{q+1},t_{r+1}''\} \cdots \{a_m, t_{m-(q-r)}''\}} t_{m-(q-r)+1}'' \cdots t_{m-(q-u)}''
\sr{R1*}{\ra}$

$~$ \hspace{.15in}
$\{d_1,t_1''\} \cdots \{d_s,t_s''\} t_{s+1}'' \cdots  t_{q-(m-u)}''$.  

\bigskip

Thus every critical pair of the rewriting system $(T,R)$ is resolved,
and so this system is confluent.  This concludes the proof that
this system is complete.
\end{proof}

This also concludes the proof of Theorem~\ref{thm:kb}.

\end{document}